\documentclass[10pt, a4paper]{amsart}
\pdfoutput=1
\usepackage{amsmath,amsfonts,amssymb,amsthm,color}  
\usepackage{mathrsfs} 
\usepackage{fullpage}

\usepackage[utf8]{inputenc}
\usepackage{graphicx}





\theoremstyle{plain}
\newtheorem{theorem}{Theorem}[section]

\newtheorem{lemma}[theorem]{Lemma}
\newtheorem{proposition}[theorem]{Proposition}

\theoremstyle{definition}

\newtheorem{remark}[theorem]{Remark}

\numberwithin{equation}{section}
\newtheorem*{theorem*}{Theorem}

\newcommand{\R}{{\mathbb R}}

\newcommand{\n}{|\nabla u(t,x)|}
\newcommand{\ddd}{\, dxdt}
%
\def\Xint#1{\mathchoice
{\XXint\displaystyle\textstyle{#1}}%
{\XXint\textstyle\scriptstyle{#1}}%
{\XXint\scriptstyle\scriptscriptstyle{#1}}%
{\XXint\scriptscriptstyle%
\scriptscriptstyle{#1}}%
\!\int}
\def\XXint#1#2#3{{\setbox0=\hbox{$#1{#2#3}{%
\int}$ }
\vcenter{\hbox{$#2#3$ }}\kern-.6\wd0}}
\def\barint{\,\Xint -} 
\def\bariint{\barint_{} \kern-.4em \barint}
\def\bariiint{\bariint_{} \kern-.4em \barint}
\renewcommand{\iint}{\int_{}\kern-.34em \int} 
\renewcommand{\iiint}{\iint_{}\kern-.34em \int} 

\DeclareMathOperator{\dive}{div}

\providecommand{\abs}[1]{ \lvert#1  \rvert}

\providecommand{\trm}[1]{\textrm{#1}}

\title{A reverse H\"older inequality for the gradient of solutions to Trudinger's equation}
\author{Olli Saari\textsuperscript{*} and Sebastian Schwarzacher}
\date{\today}

\address{Olli Saari, Mathematical Institute, 
	University of Bonn,
	Endenicher Allee 60, 53115, Bonn,
	Germany}
	\email{saari@math.uni-bonn.de}
	
\address{Sebastian Schwarzacher, Katedra matematick\'e analy\'zy, Matematicko-fyzik\'aln\'\i\
  fakulta Univerzity Karlovy, Sokolovsk\'a 83, 186 75 Praha 8, Czech Republic}
  \email{schwarz@karlin.mff.cuni.cz}

\subjclass[2010]{Primary: 35K65, 35K92} 

\keywords{Trudinger's equation, reverse H\"older inequality, $p$-Laplacian, Gehring's lemma}

\newcommand\blfootnote[1]{%
  \begingroup
  \renewcommand\thefootnote{}\footnote{#1}%
  \addtocounter{footnote}{-1}%
  \endgroup
}

\begin{document}

\begin{abstract}
We provide a higher integrability result for the gradient of positive solutions to Trudinger's equation (also known as the doubly non-linear equation) for the range $p\in [2,\infty)$. The estimate is achieved by refining a construction of intrinsic cylinders from the vectorial setting by incorporating estimates that are only available in the scalar case.
\end{abstract}

\maketitle

\blfootnote{\textsuperscript{*}Corresponding author}
\section{Introduction}
We study doubly non-linear parabolic partial differential equations modeled by Trudinger's equation
\begin{equation}
\label{eq:intro:equation}
\dfrac{\partial ( u^{p-1} ) }{\partial t} - \dive   ( |\nabla u|^{p-2} \nabla u ) = 0, \quad 1 < p < \infty. 
\end{equation} 
It was introduced by Trudinger in \cite{Trudinger1968}, where a scale and location invariant parabolic Harnack inequality for positive weak solutions was proved, generalizing the work of Moser \cite{Moser1964}. The equation \eqref{eq:intro:equation} is motivated for instance by its connection to the non-linear eigenvalue problem and sharp constants in Sobolev inequalities. Given any bounded domain $\Omega \subset \mathbb{R}^{n}$, one defines the first non-linear eigenvalue of the $p$-Laplacian
\begin{equation}
\label{eq:intro:eigen}
 \lambda_p =   \inf_{u \in W^{1,p}_0(\Omega) \setminus \{0\}} \frac{\int_{\Omega} |\nabla u|^{p} \, dx}{\int_{\Omega} |u|^{p} \, dx} .\end{equation}
The minimizer $v$ of the Raleygh quotient is unique up to a multiplicative constant \cite{Lindqvist1990,Lindqvist1992}. On the other hand, given any weak solution $u \in L_{loc}^{p}( 0, \infty;W_0^{1,p}(\Omega)) $ to \eqref{eq:intro:equation}, then either
\[ \lim_{t \to \infty} e^{\frac{\lambda_p}{p-1} t} u  = v , \]
or the limit, understood in $L^{p}$ sense, is identically zero \cite{Hynd2017}. 
This is the connection of the evolution equation \eqref{eq:intro:equation} and the geometric quantity \eqref{eq:intro:eigen}.

Our focus is on local regularity theory of positive weak solutions to \eqref{eq:intro:equation}. 
Despite the simple form of the parabolic Harnack estimate, 
the regularity theory of \eqref{eq:intro:equation} is not as simple as one might expect. 
Weak solutions are known to be locally H\"older continuous \cite{Vespri1992},
but to the best of our knowledge already H\"older continuity of the gradient is unknown. 
In addition, 
some fundamental questions such as the ones about 
uniqueness of solutions and validity of comparison principles 
are still partially open,
see \cite{Lindgren2019} for results and a discussion on open problems.
We also refer to \cite{Kinnunen2007},\cite{Kuusi2012}, \cite{Kuusi2012a} and \cite{Siljander2010} 
for more on regularity theory. 

In the present paper, 
we prove a new regularity result for the non-negative solutions of \eqref{eq:intro:equation}.
Our proof also applies to solutions of more general equations 
with principal part satisfying the natural growth conditions, see Section \ref{sec:solutions}.
The following theorem includes all exponents $p\in [2,\infty)$,
which extends over what can be deduced from the vectorial case in \cite{Boegelein2018}
by allowing arbitrarily large values of $p$.
\begin{theorem}
\label{thm:degenerate}
Let $n \ge 1$, $p \ge 2$ and $\Lambda \ge 1$.
Then there exist $c, \epsilon > 0$ only depending on $n$, $p$ and $\Lambda$ so that
for all space time cylinders $Q_{r,r^{p}} = I_{r^{p}} \times B_{ r}$ 
and for all non-negative solutions $u \in L_{loc}^{p}(I_{(4r)^{p}}; W_{loc}^{1,p}(B_{4r}))$
to \eqref{structure} and \eqref{EQ1}
it holds 
\[\bariint_{Q_{r,r^{p}}} |\nabla u| ^{p(1+\epsilon)} \ddd \leq c \left( \bariint_{Q_{2r,(2r)^{p}}} \left( \frac{u^{p}}{(2r)^{p}} + |\nabla u| ^{p}  \right) \ddd \right)^{\epsilon} \bariint_{Q_{2r,(2r)^{p}}} |\nabla u|  ^{p} \ddd.\]   
\end{theorem}

We briefly recall the history of the topic.
The study of higher integrability of gradients of solutions to partial differential equations 
goes back to Bojarski \cite{Bojarski1957} and quasiregular mappings in the plane. 
See also \cite{Gehring1973} for quasiconformal mappings in space.
Meyers \cite{Meyers1963} studied general linear elliptic equations,
and elliptic systems were later treated in \cite{Meyers1975}.
The first results for linear uniformly parabolic equations and systems are due to Giaquinta and Struwe \cite{Giaquinta1982}. 
Extending the results on higher gradient integrability to nonlinear parabolic equations
remained an open problem for some time 
until in their seminal paper \cite{Kinnunen2000} Kinnunen and Lewis 
managed to deal with systems of $p$-parabolic type.
Their proof relies on the method of intrinsic scaling,
which was originally introduced by DiBenedetto and Friedman \cite{DiBenedetto1985a} for other purposes.
The $p$-parabolic equation 
\begin{align}
\label{eq:plap}
\dfrac{\partial   u }{\partial t} - \dive   ( |\nabla u|^{p-2} \nabla u ) = 0, \quad 1<p< \infty
\end{align}
becomes degenerate or singular when the gradient of the solution vanishes,
which accounts for the behaviour qualitatively very different from what is seen in the linear setting.
The class of weak solutions is not closed under multiplication by constants
and consequently no proper homogeneous reverse H\"older condition can be expected to hold 
for the gradients of all solutions.

Equations of porous medium type
\[\dfrac{\partial   u }{\partial t} -  m \dive ( u^{m-1} \nabla u )  = 0, \quad m > 0 \]
exhibit a distinct difficulty as the equations become degenerate or singular 
depending on the values of the solution itself rather than its gradient. 
The higher gradient integrability was established only very recently by Gianazza and the second author 
in \cite{Gianazza2019} and \cite{Gianazza2019b}. 
These results require a very careful analysis of the covering properties 
of the intrinsic cylinders relative to the solution. 
The ideas there, in part originating from \cite{Schwarzacher2014}, 
have later also been used to study systems of porous medium type \cite{Boegelein2019} 
and global variants of the above mentioned problems \cite{Moring2019}. 

Equations of the type \eqref{eq:intro:equation} studied in this paper 
can be formally understood as equations for $v=u^{p-1}$ given by
\[\dfrac{\partial  v }{\partial t} - \frac{1}{(p-1)^{p-1}}\dive   ( v^{2-p} |\nabla v|^{p-2} \nabla v ) = 0 .\]
From this formulation it is clear 
that the equation becomes degenerate or singular 
depending on both the values of the solution and the values of its gradient. 
Prior to this work,
the higher gradient integrability was only known in the restricted range $p \in (2n/(n+2), 2n/(n-2)_{+} )$
when the dimension $n > 2$ and for all $p>1$ when the dimension $n \in \{1,2\}$.
These results can be found in \cite{Boegelein2018},
whose methods actually apply in the full generality of systems and sign-changing solutions.
The two restrictions on exponents are connected to the construction of intrinsic cylinders 
and the Sobolev embeddings. 
While the lower bound $2n/(n+2)$ also appears in the analogous results for the $p$-Laplacian~\eqref{eq:plap} (see~\cite{Kinnunen2000}), 
the upper bound does not.
Our contribution is to remove it when dealing with non-negative solutions to equations.
Whether or not the upper bound can be removed even in the vectorial setting remains an open problem.
\bigskip 
 
\noindent
\textbf{Acknowledgement.} 
This work was made possible by the generous support by DFG through the collaborative research center SFB 1060 at University of Bonn. We also acknowledge support by DFG through the Hausdorff Center for Mathematics under Germany's Excellence Strategy -- EXC-2047/1 -- 390685813 as well as the support of the research support programs of Charles University: PRIMUS/19/SCI/01 and UNCE/SCI/023. S.\ Schwarzacher thanks the support of the program GJ17-01694Y of the Czech national grant agency (GA\v{C}R). We wish to thank J.~Kinnunen for inspiring discussions and for introducing us to Trudinger's equation. In particular, we thank him for pointing out a mistake in an earlier version of this manuscript.
We also wish to thank the anonymous referee for several helpful remarks that improved the presentation of our results in the final form of this manuscript.

\section{Preliminaries}

\subsection{Notation}
We work on $\mathbb{R} \times \mathbb{R}^{n}$ where the first coordinate is called time and the remaining ones space. 
The dimension $n \geq 1$ is fixed throughout the paper. 
The symbols $c$ and $C$ refer to constants that only depend on quantities 
we do not keep track of.
Their actual value may change from expression to expression even within a single line.
Given two numbers $a,b > 0$,
such that $a \leq cb$ holds for a constant $c$ as above, 
we sometimes write $a \lesssim b$.
The binary relations $\gtrsim$ and $\sim$ are defined analogously. 
We write $\chi_{E}$ for the indicator function of a set $E \subset \R^{1+n}$.

Given a (positive locally finite Borel) measure $\mu$, 
usually given by a locally integrable weight function $\eta$, 
we denote the mean value over a Borel set $E$ of positive measure by
\[ \frac{1}{\mu(E)} \int_{E} f \, d \mu = \barint_{E} f \, d \mu = (f)_{E}^{\mu}. \]
In case $\mu$ is the Lebesgue measure, it is suppressed from the notation. 
We write $|E|$ for the Lebesgue outer measure of a set $E$. 
We use the same notation for both $n$-dimensional and $(n+1)$-dimensional Lebesgue measures.
It will always be clear from the context which one is being used.

Given $x \in \R^{n}$ and $r > 0$,
we write $B_r(x) = \{y \in \R^{n}: |x-y| < r\}$ for a Euclidean ball.
If the center $x$ and the radius $r$ are clear from the context or not important for the argument,
we suppress them from the notation.
Even the letter $B$ alone is reserved for $n$-dimensional Euclidean balls as above.
Similarly, 
given $t \in \R$ and $h > 0$,
we write $I_h(t) = (t-h/2,t+h/2)$ and omit $h$ and $t$ 
whenever we can without disturbing the reading of the argument.

Given two positive numbers $r$ and $s$, 
we write $Q_{r,s}$ for the space time cylinder $I_s \times B_r$.
We use the shorthand notation $bQ_{r,s} = Q_{br,bs}$ for concentric dilations. 
Similar notation is used for intervals and balls.

\subsection{Solutions}
\label{sec:solutions}
Let $\Lambda \ge 1$ and $p > 1$. 
Let $A : \mathbb{R}^{n+1} \times \mathbb{R} \times \mathbb{R}^{n} \to \mathbb{R}^{n}$ be a function measurable in each variable
and
satisfying the structural conditions
\begin{equation}
\label{structure}
A ( t,x,u , \xi  ) \cdot \xi \geq \Lambda^{-1} |\xi|^{p}, \quad |A ( t,x,u , \xi  ) | \leq \Lambda |\xi|^{p-1}  .
\end{equation}
We study non-negative weak solutions to the equations 
\[\dfrac{\partial u^{p-1} }{\partial t} - \dive A ( t,x,u,\nabla u )  = 0 .\]
Consider a space time cylinder $I_{r^{p}} \times B_{r}$.
A function $u \in L^{p}_{loc}(I_{r^{p}}; W^{1,p}_{loc}(B_{r}))$ is a weak solution if 
\begin{equation}
\label{EQ1}
\iint  A ( t,x,u,\nabla u ) \cdot \nabla \varphi \ddd- \iint u^{p-1} \cdot \partial_t \varphi\ddd = 0
\end{equation}
holds for all test functions $\varphi \in C_c^{\infty}(I_{r^{p}} \times B_{r})$. 
If the left hand side is non-positive for all non-negative test functions, 
$u$ is said to be a subsolution. 
If the left hand side is non-negative for all non-negative test functions, 
$u$ is said to be a supersolution. 
By a standard apporximation argument,
we can use compactly supported Sobolev functions as test functions 
as opposed to smooth functions.
In what follows, 
$u$ will always denote a positive weak solution to the equation.
As our main result is inherently local,
we do not refer to the the cylinder $I_{r^{p}} \times B_r$ anymore
but work as if it were the full space time.
 
\subsection{Time derivative}
\label{sec:mollification}
As the solutions are not required to exhibit any a priori differentiability in the time variable,
there is a technical difficulty when trying to use the solution itself as a test function. 
To overcome this problem, 
we can use a mollified version of the equation as an intermediate step in the proof. 
For a smooth and even function $\chi_{[-4^{-1},4^{-1}]} \leq \zeta \leq \chi_{[-2^{-1},2^{-1}]}$
and $\epsilon > 0$,
we set for any locally integrable $\varphi$
\[\varphi_\epsilon(t,x) := \int_{\mathbb{R}} \varphi(s,x) \zeta \left( \frac{t-s}{\epsilon} \right) \, \frac{ds}{\epsilon}  . \]
Using $\varphi_{\epsilon}$ with $\varphi \in C_c^{\infty}(\R^{1+n})$ as a test function, 
we can convert the equation \eqref{EQ1} to
\begin{align*}
\iint  A ( t,x,u,\nabla u )_{\epsilon} \cdot \nabla \varphi \ddd- \iint (u^{p-1})_{\epsilon} \cdot \partial_t \varphi\ddd &= 0 
\end{align*}
This together with the correct use of the mollified solution as a test function can then be used 
to justify certain computations to follow
that we decided to keep at formal level for the sake of better readability. 

\subsection{Auxiliary inequalitites}
Given a Borel probability measure $\mu$ and $q\in [1,\infty)$, the mean value $(f)_{E}^{\mu}$ always satisfies
\[  \bigg(\int_{E} |f -(f)_{E}^{\mu}|^q \, d \mu\bigg)^\frac1q   \leq 2 \inf_{c}  \bigg(\int_{E} |f -c |^q \, d \mu\bigg)^\frac1q.\] 
We refer to this fact in its various forms as the \textit{best constant property} of the mean value and it can be used to justify all occasions where we change a constant inside a mean oscillation integral as the one above.

In order to effectively carry out certain iterative arguments, 
we use the following lemma, 
which can be found as Lemma 6.1 on page 191 of \cite{Giusti2003}.
\begin{lemma}
\label{lemma:iterationlemma}
Let $A,B, \alpha > 0$ and $\delta \in (0,1)$.
Let $0< r < R < \infty$ and let $Z$ be a bounded and positive function 
such that for all $r \leq \rho_1 < \rho_2\leq R$ it holds
\[Z(\rho_1) \leq A(\rho_2 - \rho_1)^{-\alpha} + B  + \delta Z(\rho_2) .\]
Then
\[Z(r) \leq c(\alpha,\delta) \left( A(R-r)^{-\alpha} + B \right) .\]
\end{lemma}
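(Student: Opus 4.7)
The proof follows the classical Giusti-style iteration argument: build a geometric sequence of radii inside $[r,R]$, apply the hypothesis inductively along it, and exploit the boundedness of $Z$ together with $\delta<1$ to kill the remainder in the limit.

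I would first fix a parameter $\tau \in (\delta^{1/\alpha},1)$, so that the effective contraction factor $\theta := \delta\tau^{-\alpha}$ satisfies $\theta<1$; such a $\tau$ exists because $\delta \in (0,1)$. Then I would define inductively $\rho_0 := r$ and $\rho_{j+1} := \rho_j + (1-\tau)\tau^{j}(R-r)$. Telescoping the geometric series gives $\rho_j = r + (1-\tau^{j})(R-r) < R$ for every finite $j$, and $\rho_j \nearrow R$. Applying the assumed inequality to the pair $(\rho_j,\rho_{j+1})$ and iterating from $j=0$ up to $j=k-1$ produces
\[Z(r) \;\leq\; A\sum_{j=0}^{k-1} \delta^{j}(\rho_{j+1}-\rho_j)^{-\alpha} \;+\; B\sum_{j=0}^{k-1}\delta^{j} \;+\; \delta^{k} Z(\rho_k).\]
Plugging in $(\rho_{j+1}-\rho_j)^{-\alpha} = (1-\tau)^{-\alpha}\tau^{-\alpha j}(R-r)^{-\alpha}$ turns the first sum into a geometric series in $\theta<1$, bounded by $(1-\tau)^{-\alpha}(1-\theta)^{-1} A(R-r)^{-\alpha}$; the second sum is bounded by $B/(1-\delta)$.

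To finish, I would invoke the boundedness of $Z$: since $\delta^{k}\to 0$ and $Z(\rho_k)$ remains bounded uniformly in $k$, the remainder $\delta^{k}Z(\rho_k)$ tends to zero, so letting $k\to\infty$ yields $Z(r) \leq c(\alpha,\delta)\bigl(A(R-r)^{-\alpha} + B\bigr)$, which is the asserted bound (the form written with $(\rho_2-\rho_1)^{-\alpha}$ is then recovered by applying the argument on any subinterval, playing the roles of $r$ and $R$). There is no genuine analytic obstacle here, the content of the lemma being algebraic once the correct geometric sequence is in place. The only point requiring some care is the simultaneous control of the two opposing terms appearing in the constant: one needs $\tau$ close enough to $1$ that $\theta=\delta\tau^{-\alpha}<1$, yet not so close to $1$ that the prefactor $(1-\tau)^{-\alpha}$ blows up, and balancing these is exactly what produces the constant $c(\alpha,\delta)$.
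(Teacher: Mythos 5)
Your proof is correct and is precisely the standard iteration argument from the cited source (Lemma 6.1 in Giusti's book), which the paper invokes without reproving; the choice $\tau\in(\delta^{1/\alpha},1)$, the geometric sequence of radii, and the vanishing remainder $\delta^k Z(\rho_k)$ are exactly the canonical steps. You also correctly read the conclusion's $(\rho_2-\rho_1)^{-\alpha}$ as a typo for $(R-r)^{-\alpha}$.
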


\subsection{Positivity}
When it comes to Harnack estimates, 
there is some discrepancy in the positivity assumptions imposed on solutions in the literature, 
and we clarify that point here. 
For our estimates, we need not use the full strength of the Harnack inequality, 
but a much weaker supremum estimate for subsolutions will do. 
In particular, the use of H\"older continuity and strict positivity can be avoided. 

We recall the following observation.
Given $\epsilon > 0$ and a positive subsolution $u$, 
we let $u_ \epsilon = \max (u,\epsilon)$. 
Consider a test function $\eta \in C_c^{\infty}(\R^{1+n})$.
Following the proof of Lemma 1.1 of Section II in \cite{DiBenedetto1993a}, 
we see that the choice of test function $\varphi = \eta (u_{\epsilon} - \epsilon) (u_\epsilon - \delta )^{-1} $ with $\delta \in (0,\epsilon)$ gives (using an approximation argument as in Subsection~\ref{sec:mollification})
\begin{align*}
0	&\geq \iint  A ( t,x,u,\nabla u )  \cdot \nabla \varphi \ddd - \iint u^{p-1} \cdot \varphi_t \ddd \\
	&= \iint_{\{ u > \epsilon\}}   \frac{u_\epsilon - \epsilon}{u_\epsilon- \delta}  A ( t,x,u_\epsilon ,\nabla u_\epsilon  )  \cdot \nabla \eta  \ddd  + \iint   \frac{\eta(   \epsilon -\delta )}{(u_\epsilon - \delta)^{2}}  A ( t,x,u ,\nabla u  )  \cdot \nabla u_\epsilon  \ddd  \\
	 & \qquad - \iint  u^{p-1} \left( \eta_t \frac{u_\epsilon - \epsilon}{u_\epsilon - \delta}  +  \frac{\eta(     \epsilon - \delta) }{(u_{\epsilon} -\delta)^{2}} (u_\epsilon)_t \right)   \ddd \\
	 &\geq \iint_{\{ u > \epsilon\}}   \frac{u_\epsilon - \epsilon}{u_\epsilon- \delta}  A ( t,x,u_\epsilon ,\nabla u_\epsilon  )  \cdot \nabla \eta  \ddd   - \iint  u_\epsilon^{p-1} \left( \eta_t \frac{u_\epsilon - \epsilon}{u_\epsilon - \delta}  +  \frac{\eta(     \epsilon - \delta) }{(u_{\epsilon} -\delta)^{2}} (u_\epsilon)_t \right)   \ddd \\
	 & \longrightarrow \iint  A ( t,x,u_\epsilon ,\nabla u_\epsilon )  \cdot \nabla \eta \ddd - \iint u_{\epsilon}^{p-1} \cdot \eta_t \ddd
\end{align*}  
as $\delta \to \epsilon$. 
We could extend the domain of integration on the last line
from $\{ u > \epsilon\}$ to $\R^{1+n}$ because $|A(t,x,u_\epsilon,\nabla u_\epsilon)| \leq \Lambda |\nabla u_\epsilon|^{p-1}$ by \eqref{structure} and $\nabla u_{\epsilon} = 0$
almost everywhere in $ \{ u \le \epsilon\} $.
Hence the truncated function $u_\epsilon$ is a subsolution
that is bounded from below by $\epsilon > 0$.

The following lemma is stated in \cite{Kinnunen2007} for subsolutions $u$
with $u \ge \epsilon > 0$
but approximating by truncations as above, 
we can use it for all non-negative subsolutions $u \ge 0$.
\begin{lemma}[Lemma 5.1 \cite{Kinnunen2007}]
\label{lemma:harnack}
Consider weak subsolutions to \eqref{structure} and \eqref{EQ1}.
There exist positive constants $C = C(n,p, \Lambda)$ and $\theta(n,p)$ 
such that for all $\sigma \in (1,2)$, all $s > 0$
and all weak subsolutions $u \geq 0$ in
\[ Q_{2R,(2R)^{p}} =   I_{(2R)^{p}} \times B_{2R}   \] 
it holds 
\[ \sup_{z \in Q_{R,R^{p}}} u(z) \leq \left( \frac{C}{(\sigma - 1)^{\theta}} \bariint_{I_{(\sigma R)^{p}} \times  B_{\sigma R}} u^{s} \ddd \right)^{1/s} . \] 
\end{lemma}

\subsection{Energy estimate}
Next we derive an energy estimate for the solution.
The proof is standard, 
but we repeat the short argument for completeness.

\begin{lemma}[Caccioppoli estimate]
There is a constant $C = C(n,p,\Lambda)$ 
such that for all non-negative weak solutions $u$,
for all $1\le a < b \le 2$,
and for all space time cylinders $Q_{r,r^{p}} = I_{r^{p}} \times B_{r}$ 
\begin{equation} 
\label{eq:cacc2}
\sup_{s \in I_{ar^{p}}} \barint_{ \{s\} \times B_{ar}}  \left( \frac{u}{r} \right)^{p} \, dx + \bariint_{Q_{ar,ar^{p}}} |\nabla u|^{p} \ddd 
 \leq \frac{C}{ (b-a)^{p} } \bariint_{Q_{br,br^{p}}} \left( \frac{ u}{r} \right)^{p} \ddd .
\end{equation}
\end{lemma}

\begin{proof}
As the structure of the equation \eqref{EQ1} is scaling and translation invariant,
it suffices to prove the estimate in the case $Q = (-2^{-1},2^{-1}) \times B(0,1)$.
Consider a smooth function $\eta$ with 
\[
\chi_{aQ} \leq \eta \leq \chi_{bQ} ,\qquad |\partial_t \eta| + |\nabla \eta| \le  2 (b-a)^{-1}.
\]
Let $s \in (-a,a)$ and $h \in (0,a-s)$.
Let $\tau$ be the piecewise linear function of one variable with
\[
\chi_{(-\infty, s]} \leq \tau \leq \chi_{(-\infty, s+h]}, \qquad |\tau'| \leq h^{-1}.
\]
We use $\varphi = \eta^{p} \tau u$ as a test function in \eqref{EQ1}.

We start by estimating the first term on the left hand side of the claimed estimate.
Note first that 
\[
u^{p-1}  \varphi_t = u^{p-1} u_t \eta^{p} \tau + u^{p }  ( \eta^{p} \tau)_t = p^{-1}(u^{p})_t \eta^{p} \tau  + u^{p} (\eta^{p} \tau)_{t}.
\]
Integrating by parts in the time variable 
and using the definition of $\tau$  
\begin{multline*}
- \int_{-b}^{s+h} \int  u^{p-1}  \varphi_t \ddd 
	=  - \left(1 - \frac{1}{p} \right) \int_{-b}^{s+h} \int  u^{p} ( \eta^{p} \tau )_t   \ddd \\
	\geq \left(1- \frac{1}{p}\right) \frac{1}{h} \int_{s}^{s+h}\int  u^{p} \eta^{p} \ddd - \frac{2(p-1)}{b-a} \iint_{bQ} u^{p} \eta^{p-1} \ddd .
\end{multline*}
By the equation \eqref{EQ1} and the structural condition \eqref{structure}
\begin{multline}
\label{eq:caccproof1}
- \int_{-b}^{s+h} \int u^{p-1} \partial_t \varphi \ddd 
	=  - \iint A(t,x,u,\nabla u) \cdot (   \eta^{p} \tau \nabla u  + u \nabla (\eta^{p} \tau)  ) \ddd \\
	\leq - \frac{1}{\Lambda} \iint |\nabla u |^{p}\eta^p \tau \ddd +  \frac{2 p \Lambda}{b-a} \iint u |\nabla u|^{p-1} \eta^{p-1} \tau \ddd .
\end{multline}
Applying Young's inequality to the second term,
we bound it by 
\[
    \frac{2^{p} p^{p-1} \Lambda^{2p-1}}{ (b-a)^{p}} \iint_{bQ} u^{p} \tau \ddd + \frac{(p-1)}{p \Lambda}  \iint |\nabla u|^{p} \eta^{p} \tau \ddd  .
\]
Putting the estimates together we obtain 
\begin{multline*}
\left(1- \frac{1}{p}\right) \frac{1}{h} \int_{s}^{s+h}\int  u^{p} \eta^{p} \ddd 
+ \frac{1}{p \Lambda}\iint |\nabla u|^{p} \eta^{p} \tau \ddd  \\
\le \frac{2(p-1)}{b-a} \iint  u^{p} \eta^{p-1} \ddd + \frac{2^{p} p^{p-1} \Lambda^{2p-1}}{ (b-a)^{p}} \iint_{bQ} u^{p} \tau  \ddd .
\end{multline*}
Sending $h \to 0$ and taking the supremum over $s \in I_{ar^{p}}$,
we conclude the bound for the first term on the left hand side of \eqref{eq:cacc2}.
The second term is clear by sending $s \to a$.
\end{proof}

\begin{remark} 
By essentially the same proof,
we see that there is $C= C(n,p,\Lambda)$
such that given a general cylinder $Q_{r,s} = I_s \times B_r$ and $1\le a <b \le 2$,
then for all non-negative solutions $u$
\begin{equation}
\label{eq:cacc3}
\sup_{\tau \in I_{as} } \int_{\{ \tau \} \times B_{ar} }  \frac{u^{p}}{s}  \, dx + \bariint_{Q_{ar,as}} |\nabla u|^{p} \ddd 
 \leq \frac{C}{ b-a} \bariint_{Q_{br,bs}}  \frac{ u | \nabla u |^{p-1}}{r}  \ddd + \frac{C}{ b-a} \bariint_{ Q_{br,bs}}  \frac{ u ^{p} }{s}  \ddd.
\end{equation}
We argue as before, 
but instead of applying Young's inequality to the right hand side of \eqref{eq:caccproof1},
we accept the second term to the right hand side of \eqref{eq:cacc3}.
This form of the energy estimate will also be needed later.
\end{remark}

\begin{lemma}
\label{lemma:slice_average}
Let $u$ be a non-negative weak solution to \eqref{EQ1}. 
Let $\eta \in C_{c}^{\infty}(\mathbb{R}^{n})$. 
Then it holds
\begin{align*}
\left \lvert \int_{\mathbb{R}^{n}} u(s,x)^{p-1} \eta(x) \, dx - \int_{\mathbb{R}^{n}} u(x,t)^{p-1} \eta(x) \, dx \right \rvert \leq \Lambda \int_{s}^{t} \int_{\mathbb{R}^{n}} | \nabla u|^{p-1} |\nabla \eta| \ddd  .
\end{align*}
\end{lemma}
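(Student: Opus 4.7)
The plan is to test the weak equation \eqref{EQ1} with a separable function $\varphi(\tau,x)=\eta(x)\psi(\tau)$, where $\psi\in C_c^\infty(\mathbb{R})$ is a smooth approximation of $\chi_{[s,t]}$ on some small scale $h>0$. Since $u^{p-1}$ need not have any time regularity, I would first substitute $\varphi$ into the mollified formulation from Subsection~\ref{sec:mollification}, giving
\[
\iint A(\tau,x,u,\nabla u)_\epsilon\cdot \nabla\eta(x)\,\psi(\tau)\ddd = \iint (u^{p-1})_\epsilon\,\eta(x)\,\psi'(\tau)\ddd.
\]
Letting $h\to 0$, $\psi'$ tends to $\delta_s-\delta_t$; because $(u^{p-1})_\epsilon$ is smooth in time, the right-hand side converges to
\[
\int_{\mathbb{R}^n}\bigl((u^{p-1})_\epsilon(s,x)-(u^{p-1})_\epsilon(t,x)\bigr)\eta(x)\,dx,
\]
while the left-hand side becomes $\int_s^t\!\int_{\mathbb{R}^n} A_\epsilon\cdot\nabla\eta\,\ddd$. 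Taking absolute values and invoking the structural bound $|A|\leq \Lambda |\nabla u|^{p-1}$ from \eqref{structure} controls the latter by $\Lambda\int_s^t\!\int_{\mathbb{R}^n}(|\nabla u|^{p-1})_\epsilon|\nabla\eta|\ddd$. Finally, sending $\epsilon\to 0$ recovers the unmollified quantities at Lebesgue times in $\tau$, yielding the claimed inequality (up to the constant $\Lambda$ implicit in the structural condition).

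The one delicate point is the evaluation of $u^{p-1}$ at the slice times $s$ and $t$ in spite of its rough time dependence; this is exactly what the mollification machinery of Subsection~\ref{sec:mollification} is designed to handle, and once it is in place the rest of the argument reduces to a single integration by parts in $\tau$ together with the bound on $|A|$.
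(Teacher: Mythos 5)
Your proof is correct and follows essentially the same route as the paper: test the (mollified) weak formulation against a product test function $\eta(x)\,\psi(\tau)$ where $\psi$ approximates $\chi_{[s,t]}$, integrate by parts in time, and bound the flux term using the structural condition. The paper uses a piecewise-linear $\tau_h$ in place of your smooth $\psi$ and keeps the time-mollification at the formal level announced in Subsection~\ref{sec:mollification}, but these are cosmetic differences. One point worth flagging: as you observe, for general $A$ satisfying \eqref{structure} the estimate comes with the factor $\Lambda$, whereas the paper's statement and proof are written as if $A(t,x,u,\nabla u)=|\nabla u|^{p-2}\nabla u$; since the lemma is only ever used up to constants depending on the data, this is harmless, but your version with the explicit $\Lambda$ is the one that matches the stated generality.
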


\begin{proof}
Take the piecewise linear $\tau_h$ such that $\chi_{(s,t)} \leq \tau_h \leq \chi_{(s-h,t+h)}$ with $|(\tau_h)'| \leq h^{-1}$. Then the left hand side of the claimed inequality equals
\[
 \lim_{h \to 0} \left \lvert \iint u^{p-1} \partial_t( \tau_h \eta ) \ddd \right \rvert
	 = \lim_{h \to 0}  \left \lvert \int_{s}^{t} \int_{\mathbb{R}^{n}} A(t,x,u,\nabla u) \cdot \nabla ( \tau_h \eta) \ddd \right \rvert 
	\leq \Lambda \int_{s}^{t} \int_{\mathbb{R}^{n}} | \nabla u|^{p-1} |\nabla \eta| \ddd.
\]
\end{proof}

\section{Intrinsic cylinders}
Consider a cylinder $Q_{\delta r,r^{p}}$ with $\delta \in (0,1]$. 
Fix a number $K \ge 1$.
Given a fixed value of $\alpha \ge 1$, 
we call a cylinder $K$-sub-intrinsic if
\begin{align}
\label{eq:modtousual_1}
\frac{ \displaystyle \left( \bariint_{ Q_{2 \delta r,(2r)^{p}} }  \left( \frac{u}{ \delta r} \right)^{\alpha p} \ddd \right)^{\frac{1}{\alpha p}} }{\displaystyle \left( \bariint_{ Q_{\delta r,r^{p}} }  |\nabla u|^{p} \ddd \right)^{\frac{1}{ p}} } \leq K \delta^{- \frac{p}{p-2}} 
\end{align}
and $K$-super-intrinsic if
\begin{align}
\label{eq:modtousual_2}
K \frac{ \displaystyle \left( \bariint_{ Q_{\delta r,r^{p}} }  \left( \frac{u}{ \delta r} \right)^{\alpha p} \ddd \right)^{\frac{1}{\alpha p}} }{\displaystyle \left( \bariint_{ Q_{ 2\delta r,(2r)^{p}} }  |\nabla u|^{p} \ddd \right)^{\frac{1}{ p}} } \geq \delta^{- \frac{p}{p-2}}  \quad \trm{or} \quad \delta^{- \frac{p}{p-2}}  \leq K .
\end{align}
A cylinder that is both super-intrinsic and sub-intrinsic is said to be intrinsic.
A general cylinder $Q_{\delta r, r^{p}}$ may or may not satisfy these conditions for a pair of values $\alpha$ and $K$.
The argument in \cite{Boegelein2018} is based on constructing a cover of the level set of the gradient of the solution 
so that every cylinder in the cover satisfies the conditions above with $\alpha = 1$. 
We carry out the corresponding construction for larger values of $\alpha$.

\begin{proposition}
\label{Prop:rhi_intrinsic}
Fix $\delta$, $r$ and a cylinder $Q_{\delta r, r}$. 
Let $\alpha \ge 1$ and assume that one of the alternatives in  \eqref{eq:modtousual_2} holds.
Then for a constant $C = C(n,p,\Lambda,\alpha,K)$  
\begin{equation*}
\sup_{Q_{\delta r, r^{p}}}  u  \leq C \left( \bariint_{Q_{ 2  \delta r, ( 2  r)^{p} }} u^{p} \ddd \right)^{1/p} .
\end{equation*}
\end{proposition}
\begin{proof}
We cover $Q_{\delta r, r^{p}}$ by cylinders $Q_i$ of dimensions $(\delta r)^{p} \times \delta r$. 
We let $1 < \gamma < \beta <2$.
as we aim for applying the iteration Lemma \ref{lemma:iterationlemma},
we can let all the constants appearing on the right hand side of the following estimates grow polynomially on $(\gamma -1)^{-1}(\beta - \gamma)^{-1}$. 

As $\gamma \in (1,2)$,
by Lemma \ref{lemma:harnack} 
\begin{equation*}
\sup_{Q_{\delta r, r^{p}}}  u = \sup_{i} \sup_{Q_i} u \lesssim   \sup_{t \in I_{ (\gamma r)^{p}  }} \left( \barint_{B_{\delta \gamma  r}} u(t,x)^{p} \, dx \right)^{1/p}.
\end{equation*}
By the 
Caccioppoli inequality for general cylinders~\eqref{eq:cacc3} 
\[
\sup_{t \in I_{ (\gamma r)^{p}  }}  \barint_{B_{\delta \gamma  r}} \frac{ u(t,x)^{p}}{r^{p}} \, dx 
+ \bariint_{Q_{\delta \gamma r, (\gamma r)^{p}}} |\nabla u|^{p} \ddd 
	\lesssim \bariint_{Q_{\beta \delta r, (\beta  r)^{p}}} \frac{u |\nabla u|^{p-1}}{\delta r} \ddd
	+ \bariint_{Q_{\beta \delta r, (\beta r)^{p}}}  \frac{u^{p}}{r^{p}} \ddd
\]
We assume that 
one of the alternatives in \eqref{eq:modtousual_2} holds.
We start with the case when the first alternative in \eqref{eq:modtousual_2} holds.

First, note that as $1 < \beta < 2$,
we know that $Q_{\beta \delta r, (\beta r)^{p}} \subset Q_{ 2 \delta  r, (2 r)^{p}}$
and consequently \eqref{eq:modtousual_2} implies 
\begin{equation*}
 \frac{ \displaystyle \left( \bariint_{ Q_{\delta \beta r, (\beta r)^{p}} }  \left( \frac{u}{ \delta r} \right)^{\alpha p} \ddd \right)^{\frac{1}{\alpha p}} }{\displaystyle \left( \bariint_{ Q_{\delta \beta r, (\beta r)^{p}} }  |\nabla u|^{p} \ddd \right)^{\frac{1}{ p}} } \gtrsim \delta^{- \frac{p}{p-2}} .   
\end{equation*}
Using this as an upper bound for $\delta^{-p}$,
we estimate 
\begin{multline*}
	\frac{1}{\delta^{p}} \bariint_{Q_{\beta \delta r, (\beta r)^{p}}} \frac{u |\nabla u|^{p-1}}{\delta r} \ddd \\
	\lesssim  \left( \bariint_{ Q_{\delta \beta r, (\beta r)^{p}} }  \left( \frac{u}{ \delta r} \right)^{\alpha p} \ddd \right)^{\frac{p-2}{\alpha p}} 
	\left( \bariint_{ Q_{\delta \beta r, (\beta r)^{p}} }  |\nabla u|^{p} \ddd \right)^{- \frac{p-2}{ p}}
	\bariint_{Q_{\beta \delta r, (\beta r)^{p}}} \frac{u |\nabla u|^{p-1}}{\delta r} \ddd .
\end{multline*}
Applying H\"older's inequality to the third factor, 
we get an upper bound by 
\[
\left( \bariint_{ Q_{\beta \delta r, (\beta r)^{p}} }  \left( \frac{u}{ \delta r } \right)^{\alpha p} \ddd \right)^{\frac{p-2}{\alpha p}}  \left( \bariint_{ Q_{\beta \delta r, (\beta r)^{p}}} |\nabla u|^{p} \ddd \right)^{\frac{1}{p}} \left( \bariint_{Q_{\beta \delta r,(\beta r)^{p} }} \frac{u^{p}}{(\delta r)^{p}} \ddd \right)^{\frac{1}{p}}.
\]
Applying Young's inequality with $\epsilon$,
we bound this by  
\[
\epsilon \left[ \left( \bariint_{ Q_{\beta \delta r, (\beta r)^{p}} }  \left( \frac{u}{ \delta r } \right)^{\alpha p} \ddd \right)^{\frac{p-2}{\alpha p} }  \left( \bariint_{ Q_{\beta \delta r, (\beta r)^{p}}} |\nabla u|^{p} \ddd \right)^{\frac{1}{p}}\right]^{\frac{p}{p-1}}
+ C_\epsilon   \bariint_{Q_{\beta \delta r,(\beta r)^{p} }} \frac{u^{p}}{(\delta r)^{p}} \ddd .
\]
Bounding $L^{\alpha}$ norm by $L^{\infty}$ norm, 
simplifying the exponents and applying Young's inequality again,
we bound the quantity inside the square bracket by
\[
 \frac{p-2}{p-1} \sup_{ Q_{\beta \delta r, (\beta r)^{p}} }  \frac{u^{p}}{ (\delta r)^{p} } + \frac{1}{p-1} \bariint_{ Q_{\beta \delta r, (\beta r)^{p}}} |\nabla u|^{p} \ddd .
\]
Multiplying through by $\delta^{p}$,
we have concluded 
\[
 \bariint_{Q_{\beta \delta r, (\beta r)^{p}}} \frac{u |\nabla u|^{p-1}}{\delta r} \ddd
 \le \epsilon \left(  \sup_{ Q_{\beta \delta r, (\beta r)^{p}} } \frac{u^{p}}{r^{p}}  + \bariint_{ Q_{\beta \delta r, (\beta r)^{p}}} \delta^{p} |\nabla u|^{p} \ddd  \right) + C \bariint_{Q_{\beta \delta r,(\beta r)^{p} }} \frac{u^{p}}{r^{p}} \ddd
\]
so that in particular
\begin{multline*}
\sup_{Q_{\delta r, r^{p}}}
 u(t,x)^{p} 
+ \bariint_{Q_{\delta \gamma r, (\gamma r)^{p}}} \delta^{p} r^{p} |\nabla u|^{p} \ddd \\
\leq \epsilon \left(  \sup_{ Q_{\beta \delta r, (\beta r)^{p}} } u^{p}  + \bariint_{ Q_{\beta \delta r, (\beta r)^{p}}} r^{p}\delta^{p} |\nabla u|^{p} \ddd  \right) + C \bariint_{Q_{\beta \delta r,(\beta r)^{p} }} u^{p} \ddd
\end{multline*}
where the constant $C$ grows polynomially in $\gamma - \beta$.
This estimate is amenable for an application of the iteration lemma \ref{lemma:iterationlemma}
so the proof is complete if $Q_{\delta r, r^{p}}$ satisfies the first condition in \eqref{eq:modtousual_2}.

It remains to study the case when the second alternative in \eqref{eq:modtousual_2} holds instead of the first one.
In that case we simply conclude 
\[ \bariint_{Q_{\beta \delta r, (\beta r)^{p}}} \frac{u |\nabla u|^{p-1}}{\delta r} \ddd  
	\lesssim   \bariint_{Q_{\beta \delta r, (\beta r)^{p}}} \frac{u |\nabla u|^{p-1}}{r} \ddd \]
so that by Young's inequality
\[\sup_{Q_{\delta r, r^{p}}}  \frac{ u^{p}}{r^{p}} +   \bariint_{ Q_{\delta r,r^{p}} }  |\nabla u|^{p} \ddd \\
	\le \epsilon \left( \sup_{Q_{\beta \delta r, (\beta r)^{p}}} \frac{u^{p}}{r^{p}}  +  \bariint_{ Q_{\delta r,r^{p}} } |\nabla u|^{p} \ddd \right) + C \bariint_{Q_{\beta r, (\beta r)^{p}}} \frac {u^{p}}{ r^{p}} \ddd	.\]
Hence the proof is complete by Lemma \ref{lemma:iterationlemma}.
\end{proof} 

\section{Construction of a differentiation basis}
We use the intrinsic scaling given by
\[\frac{s}{r^{p}} = \mu^{p-2}, \quad \mu = \frac{1}{\lambda} \left( \bariint_{Q_{r,s}}  \left( \frac{u}{ r} \right)^{\alpha p} \ddd \right)^{\frac{1}{\alpha p}}  \]
as a model for the construction of a basis of almost intrinsic cylinders. 

As before, we keep $p > 2$ fixed throughout the section.
We also fix $\alpha > (n/2)(1-p/2)$.
Let $R > 0$ and consider a cylinder $Q_{R,R^{p}} = I_{R^{p}} \times B_R$.
Let $C_{0}$ be a large constant whose exact value we will fix later.
Consider a non-negative function $u \in L^{\alpha p}_{loc}(\R^{1+n})$,
and consider numbers $\lambda \ge C_0 \lambda_0$ where
\[\lambda_0 = 1 + \left(  \bariint_{Q_{4R,(4R)^{p}}} \left( \frac{u}{R} \right)^{\alpha p} \ddd \right)^{\frac{1}{\alpha p}} . \]

Given $z = (t,x) \in Q_{2R,(2R)^{p}}$ and $\rho \in (0,R]$, 
define 
\begin{equation}
\label{eq:def:def}
d_z( \rho) = \min_{r \in [\rho, R]} \sup \left \lbrace \delta \in (0,1] : \left( \bariint_{ Q_{\delta r, r^{p}} }  \left( \frac{u}{\delta r} \right)^{\alpha p} \ddd \right)^{\frac{1}{\alpha p}} \leq \delta^{- \frac{p}{p-2}} \lambda  \right \rbrace  .  
\end{equation}
When $z$ is fixed and no confusion can arise, we drop it from the notation. 
The minimum is taken in order to make $\rho \mapsto d_z(\rho)$ an increasing function. 
As it is not strictly increasing, we cannot directly invert it. 
We define instead for $\eta \in (0,1]$.
\begin{equation}
\label{eq:rhomax}
\rho_{max}(z, \eta ) = \max \{ r : d_z( r)  = \eta  \} . 
\end{equation}
If no confusion can arise,
we drop $z$ from the notation.
It either holds 
\begin{equation}
\label{eq:rhomaxprop}
\left( \bariint_{Q_{ \eta \rho_{max}(\eta), \rho_{max}(\eta)^{p}} }  \left( \frac{u}{\eta r} \right)^{\alpha p} \ddd \right)^{\frac{1}{\alpha p}} = \eta^{- \frac{p}{p-2}} \lambda 
\end{equation}
or $\eta = 1$. 

For brevity, we denote $S(z,\rho) :=  Q_{d_z(\rho) \rho, \rho^{p}}(z)$. 
Again, we drop $z$ from the notation whenever convenient. 
We will use the sets $S(z,\rho)$ as a basis for the covering argument to follow.
If the number $\lambda$ is taken to be the appropriate mean value of the gradient of $u$,
then the cylinders $S(z,\rho)$ will be sub-intrinsic in the sense of \eqref{eq:modtousual_1}.

\begin{proposition}
\label{prop:properties_deformation}
Let $p$, $\alpha$, $C_0$, $R$, $Q_{R,R^{p}}$ and $u$ as above be given.
Then 
\begin{enumerate}
\item[(i)] For all $z \in Q_{2R,(2R)^{p}}$, it holds $d_z( \rho) > 0$. 
\item[(ii)] For all $z \in Q_{2R,(2R)^{p}}$, $d_z(\rho)$ is a continuous and increasing function in $\rho$. 
\item[(iii)] If $0<\rho \le s \le R$, then for all $z \in Q_{2R,(2R)^{p}}$ it holds $ d_z(\rho) \le  d_z(s)$ and 
\[d_z( s ) \leq \left( \frac{s}{\rho} \right)^{\frac{(n+p \alpha + p)(p-2)}{2 \alpha p - n(p-2)}} d_z( \rho).\]
In particular $S(z,\rho) \subset S(z,s)$.
\item[(iv)] Let $1 < N \le 4^{-1} (4C_0)^{(\alpha p)/(n+p+\alpha p)} $. If $\rho \in [R/N,2R]$, 
then for all $z \in Q_{2R,(2R)^{p}}$
\[
d_z(\rho)  = 1.
\]
\item[(v)] There are constants $c_i = c_i(n,p,\alpha)$, $i \in \{1,2\}$, 
so that if $z,w \in Q_{2R,(2R)^{p}}$, $0< r \le R$ and $S(z, r ) \cap S(w,r) \neq \varnothing$, then 
\[
S(z,r) \subset S(w, c_1 r )
\]
and 
\[
 \frac{1}{c_2} d_z(r) \le d_w(r) \le c_2 d_z(r) .
\]
\end{enumerate}
\end{proposition}

\begin{proof}
We start with the first item.
Simplifying the expression defining $d_z( \rho)$ in \eqref{eq:def:def},
we write the relevant left and right hand sides as 
\begin{equation}
	\label{intrinsic:add:1}
L(z,r,\delta) = \left( \frac{1}{|Q_{r, r^{p}}|} \iint_{ Q_{\delta r, r^{p}} }  \left( \frac{u}{ r} \right)^{\alpha p} \ddd \right)^{\frac{1}{\alpha p}} \leq \delta^{ \frac{n}{\alpha p}  - \frac{2}{p-2}} \lambda 
	= R(\delta).
\end{equation}
Now $L(z,r,\cdot)$ is increasing and $L(z,r,\delta) \to 0$ as $\delta \to 0$.
The right hand side $R(\cdot)$ is decreasing and $R(\delta) \to \infty$
as $\delta \to 0$
because
\[\beta := \frac{n(p-2) - 2 \alpha p}{\alpha p (p-2)} < 0.\]
This is the key point which imposes the lower bound on $\alpha$.
Hence there is a positive $\delta > 0$ so that $L(z,r,\delta) < R(\delta)$
and consequently $d_z(r) > 0$.

To verify the second item,
we first consider 
\begin{equation}
\label{intrinsic:add:2}
\tilde{d}_z(\rho) = \sup\{ \delta \in (0,1] : L(z,\rho,\delta) \le R(\delta) \}.
\end{equation}
We first show that $\tilde{d}(\rho)$ (we suppress $z$ from now on) is continuous.
Take any $r \in (0,R]$. 
We first show that $\limsup_{\rho \to r } \tilde{d}(\rho) \leq \tilde{d}(r)$.
This is satisfied by definition in case $\tilde{d}(r)=1$.
In the other case, when $\tilde{d}(r)<1$, then (again by definition), 
for all $1>\delta > \tilde{d}(r)$ the condition \eqref{intrinsic:add:1} is violated
so that $L(z,r,\delta) > R(\delta)$.
As $L(z,\rho,\delta)$ is continuous in $\rho$,
it holds $L(z,\rho,\delta) > R(\delta)$ for $|r-\rho|$ small enough.
For such $\rho$ we then have $\delta > \tilde{d}(\rho)$ 
and consequently $\limsup_{\rho \to r } \tilde{d}(\rho) \leq \tilde{d}(r)$.
To prove $\liminf_{\rho \to r } \tilde{d}(\rho) \ge \tilde{d}(r)$,
suppose for contradiction that
$ \liminf _{\rho \to r} \tilde{d}(\rho) = \theta \tilde{d}(r)$ for some $\theta < 1$. 
Then there is a sequence $\rho_i \to r$ 
so that $\tilde{d}(\rho_i) = \theta_i  \tilde{d}(r) $ for all $i$ and $\theta_i \to \theta < 1$. 
Moreover, 
as $L$ is continuous and increasing and $R$ continuous and decreasing,
we see that the supremum in \eqref{intrinsic:add:2} is a maximum.
Then 
\begin{multline*}
 \tilde{d}(r)^{- \frac{p}{p-2}} 
 	= \left( \bariint_{ Q_{\tilde{d}(r) r, r^{p}} }  \left( \frac{u}{\tilde{d}(r) r} \right)^{\alpha p} \ddd \right)^{\frac{1}{\alpha p}}   
 	= \lim_{i \to \infty} \left( \bariint_{ Q_{\tilde{d}(r)\rho_i, r^{p}} }  \left( \frac{u}{\tilde{d}(r) \rho_i } \right)^{\alpha p} \ddd \right)^{\frac{1}{\alpha p}} \\
 	\geq \liminf_{i \to \infty} \theta_i^{\frac{n}{\alpha p} + 1} \left( \bariint_{ Q_{\theta_i \tilde{d}(r)\rho_i, r^{p}} }  \left( \frac{u}{\theta_i \tilde{d}(r) \rho_i } \right)^{\alpha p} \ddd \right)^{\frac{1}{\alpha p}}
 	=  \theta^{\frac{n}{\alpha p} - \frac{2}{p-2}}  \tilde{d}(r)  ^{- \frac{p}{p-2}} 
\end{multline*}
which is a contradiction as the exponent of $\theta$ is negative. 
Finally, taking the minimum preserves continuity so the continuity claim on $d_z(r)$ in the second item follows. 
The fact that $d_z(r)$ is monotone increasing is immediate as the definition \eqref{eq:def:def}
is in terms of a minimum.

The first inequality in the third item is an immediate consequence of the second item.
To prove the other bound, 
take $0 < \rho \leq  s \leq R$.  
Denote $\rho_{max}(d(\rho) ) := \tilde{\rho}$ and $\delta := d(\rho) = d(\tilde{\rho})$. 
If $s \leq \tilde{\rho}$, it holds $d(\rho) = d(s) $ and the claimed bound is trivially satisfied.
Hence we can assume $s > \tilde{\rho}$.
We recall that by \eqref{eq:rhomaxprop}
equation \eqref{intrinsic:add:1} holds now with equality $L(z,\tilde{\rho}, \delta) = R(\delta)$.
Then by the definitions
\begin{multline*}
\delta^{\beta} 	= \frac{1}{\lambda} \left( \frac{1}{|Q_{\tilde{\rho},\tilde{\rho}^{p}}|} \iint_{Q_{\tilde{\delta} \tilde{\rho}, \tilde{\rho}^{p}}}  \left( \frac{u}{\tilde{\rho}} \right)^{\alpha p} \ddd \right)^{\frac{1}{\alpha p}} \\
				\leq  \frac{1}{\lambda} \frac{s}{\tilde{\rho}} \left( \frac{|Q_{s,s^{p}}|}{|Q_{\tilde{\rho},\tilde{\rho}^{p}}|} \right)^{\frac{1}{\alpha p}} \left( \frac{1}{|Q_{s,s^{p}}|} \iint_{Q_{d(s)s, s^{p}}}  \left( \frac{u}{s} \right)^{\alpha p} \ddd \right)^{\frac{1}{\alpha p}}
				\leq \left( \frac{s}{\tilde{ \rho}} \right)^{\frac{n+p(\alpha + 1)}{\alpha p}} d(s)^{\beta}
\end{multline*}
so that using $\beta < 0$, $\rho \le \tilde{\rho}$ and the fact that $d_z(\cdot)$ is increasing
we see
\[d(s) \leq \left( \frac{s}{\rho} \right)^{\frac{(n+p \alpha + p)(p-2)}{2 \alpha p - n(p-2)}} d( \rho) .\]
This concludes the proof of the third item.

It remains to prove the last two items in the list. 
Consider two points $z$ and $w$ and a length $r$ so that $S(z, r ) \cap S(w,r) \neq \varnothing$. 
We want to show $S(z,r) \subset S(w, c_1 r )$. 
This is trivially true for $c_1 = 3$ if $d_z(r) \leq d_w(r)$. 
Let $N \ge 1$.
If $r \in [R/N,2R] $, it holds
\begin{align*}  \left( \bariint_{ Q_{\delta r,r^{p}}(w) }  \left( \frac{u}{ \delta r} \right)^{\alpha p} \ddd \right)^{\frac{1}{\alpha p}}
	 &\leq \left( \frac{|Q_{4R,(4R)^{p}}|}{| Q_{\delta r,r^{p}}|} \right)^{\frac{1}{\alpha p}} \frac{ N \lambda_0}{\delta} \\
	&\le 4^{-1}(4N)^{\frac{n+p}{\alpha p} + 1}  \delta^{-\frac{n}{\alpha p}-1} \lambda_0 
	\le \frac{(4N)^{\frac{n+p}{\alpha p} + 1}}{4C_0} \lambda  \delta^{-\frac{n}{\alpha p}-1}  
	\end{align*}
Hence, if we choose $C_0 \ge 4^{-1}(4N)^{\frac{n+p}{\alpha p} + 1}$, 
then any $\delta $ with
\[ \delta^{-\frac{n}{\alpha p} - 1} \leq   \delta^{- \frac{p}{p-2}} \]
satisfies $\delta \leq d_w(r)$. 
In particular because $\alpha > (n/2)(1-2/p)$,
this is the case for all $\delta \in (0,1]$. 
This proves the fourth item of the claim as a side product.
When $r \ge R/10$,
we can also use this with $N = 10$ to conclude $d_w(r) = 1 \geq d_z(r)$
so that the claim in the fifth item follows with $c_1 = 3$.

It remains to deal with the case when $r < R / 10$ and $d_z(r) > d_w(r)$. 
For a moment, we denote
\begin{align*}
\delta_z = d_z(r), \quad
\delta_w = d_w(r), \quad
\rho_w = \rho_{max}(w, \delta_w)
\end{align*}
Because $\delta_z > \delta_w$ and $\rho_w \geq r$, it follows 
\[ Q_{ 3 \delta_z \rho_w    ,  ( 3 \rho_w )^{p}  }(z) \supset Q_{ \delta_w \rho_w, \rho_w^{p} }(w) .\]
Then by monotonicity of $d_z(\cdot)$ and the definitions
\begin{multline*}
\delta_z^{ \frac{n}{\alpha p}- \frac{2}{p-2}} 
	\geq d_z (3  \rho_w  )^{ \frac{n}{\alpha p}- \frac{2}{p-2} } 
	\geq \frac{1}{\lambda} \left( \frac{1}{|Q_{ 3 \rho_w , ( 3  \rho_w)^{p}}|} \iint_{Q_{ 3 \delta_z \rho_w    ,  (3 \rho_w) ^{p}}(z)}  \left( \frac{u}{ 3 \rho_w  } \right)^{\alpha p} \ddd  \right)^{\frac{1}{\alpha p}} \\
	\geq \frac{1}{\lambda} \left( \frac{1}{|Q_{ 3 \rho_w , ( 3  \rho_w)^{p}}|} \iint_{Q_{  \delta_w \rho_w    ,   \rho_w ^{p}}(w)}  \left( \frac{u}{  3 \rho_w} \right)^{\alpha p} \ddd  \right)^{\frac{1}{\alpha p}} 
	= 3^{-\frac{n+p}{\alpha p}-1} \delta_w^{^{ \frac{n}{\alpha p}- \frac{2}{p-2}}}
\end{multline*} 
so that 
\[
\delta_z \le 3^{- \frac{1}{\beta} \left(  \frac{n+p}{\alpha p}+1 \right)} \delta_w.
\]
Consequently, there exist constants $c_1$ and $c_2$ as claimed.
\end{proof}
The following lemma is a formulation of a Vitali type covering theorem,
and we quote it from \cite{Gianazza2019b}.
The properties established in the previous proposition show that the basis $ \{S(z,r): z \in \mathbb{R}^{n} , \ r > 0\}$ satisfies the assumptions.

\begin{lemma}[Lemma 3.2 in \cite{Gianazza2019b}]
\label{Lm:Vitali:1}
Let
\[
 \{U(x,r) : x \in \Omega,\, r\in (0,R]\}
\] 
be a family of open sets which satisfy the following properties
\begin{enumerate}
\item[(i)] Nestedness: 
\begin{equation*}
\text{ If } \quad  x\in\Omega \quad \text{ and } \quad 0<s<r\le R, \quad \text{then} \quad U(x,s)\subset U(x,r);
\end{equation*}
\item[(ii)] Almost uniform shape: There exists a constant $c_1 > 1$, such that   
\begin{equation*}
\text{if} \quad  U(x,r)\cap U(y,r)\neq \emptyset, \quad \text{then} \quad  U(x,r)\subset U(y,c_1r).
\end{equation*}
\item[(iii)] Doubling property: There exists a constant $a>1$ such that, for all $r\in (0,R]$,
\begin{equation*}
0<\abs{U(x,2r)}\leq a\abs{U(x,r)}<\infty.
\end{equation*}
\end{enumerate}
Then we can find a countable and disjoint subfamily 
$\{ U_i \}$, such that
\[
 \bigcup_{x\in\Omega} U(x,r_x) \subset \bigcup_{i} \tilde{U}_i,
 \]
 where $\tilde{U}_i=U(x_i,2c_1 r_{x_i})$, $|U_i|\sim |\tilde{U}_i|$
 and
 \[
|\Omega | \leq c\sum_i\abs{U_i},
 \]
where the constant $c>1$ depends only on $c_1$, $a$, and the dimension $M$.
\end{lemma}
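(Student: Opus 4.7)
The plan is to run the standard Vitali greedy-selection argument, adapted so that it uses only the weak axioms (i)--(iii). Set $R_0 = \sup_x r_x \leq R$ and partition the family by dyadic radii: for $k \geq 0$ let
\[
F_k = \{ U(x,r_x) : 2^{-k-1} R_0 < r_x \leq 2^{-k} R_0 \}.
\]
Using Zorn's lemma (or transfinite recursion), I choose a maximal pairwise-disjoint subfamily $\mathcal{G}_0 \subset F_0$; having fixed $\mathcal{G}_0, \dots, \mathcal{G}_{k-1}$, I pick $\mathcal{G}_k \subset F_k$ maximal among subfamilies whose members are pairwise disjoint and disjoint from every element of $\bigcup_{j<k}\mathcal{G}_j$. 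Let $\{U_i\}_i$ be an enumeration of $\bigcup_k \mathcal{G}_k$; these are pairwise disjoint by construction.

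Next I would verify the covering property. Fix any $U(y, r_y)$ from the original family and let $k$ be the index with $U(y, r_y) \in F_k$. By maximality of $\mathcal{G}_k$, some selected $U_i = U(x_i, r_{x_i})$ with $r_{x_i} \geq 2^{-k-1} R_0 \geq r_y/2$ intersects $U(y, r_y)$. Nestedness (i) gives $U(y, r_y) \subset U(y, 2 r_{x_i})$ and $U(x_i, r_{x_i}) \subset U(x_i, 2 r_{x_i})$, so these two sets of common radius $2 r_{x_i}$ meet; almost-uniform-shape (ii), applied at that radius, then yields
\[
U(y, r_y) \;\subset\; U(y, 2 r_{x_i}) \;\subset\; U(x_i, 2 c_1 r_{x_i}) \;=\; \tilde{U}_i.
\]
Iterating the doubling axiom (iii) roughly $1 + \log_2 c_1$ times gives $|\tilde{U}_i| \leq C(a, c_1) |U_i|$, so $|U_i| \sim |\tilde{U}_i|$. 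Since $\Omega \subset \bigcup_x U(x, r_x) \subset \bigcup_i \tilde{U}_i$, summing yields $|\Omega| \leq \sum_i |\tilde{U}_i| \leq C(a, c_1) \sum_i |U_i|$.

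The delicate step is upgrading axiom (ii), which is formulated only for balls of equal radius, into a containment between sets of different radii. Nestedness (i) is the right tool: it lets me inflate the smaller set $U(y, r_y)$ up to the radius of $U(x_i, r_{x_i})$, at the cost of a factor $2$, which is exactly why the enlargement constant in the covering is $2 c_1$ rather than $c_1$. A minor secondary point is making the maximal disjoint choice at each stage; in our concrete Euclidean setting the ambient $\Omega$ is separable, so the resulting subfamily is automatically countable and no measurability issue arises.
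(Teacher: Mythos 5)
The paper does not prove this lemma; it is imported verbatim from Lemma~3.2 of \cite{Gianazza2019b}, so there is no in-text proof to compare against. Your argument is the standard Vitali/Besicovitch greedy-selection proof adapted to the abstract axioms (i)--(iii), and it is correct. The dyadic layering by radius, the transfinite maximal disjoint choice at each scale, and the countability via separability of $\mathbb{R}^M$ are all sound. You correctly identify the one non-routine step: axiom~(ii) is stated only for equal radii, so one must first inflate the smaller set $U(y,r_y)$ to radius $2r_{x_i}$ using nestedness before (ii) can be applied, which is precisely why the enlargement factor in $\tilde U_i$ is $2c_1$ rather than $c_1$; iterating (iii) about $1+\lceil\log_2 c_1\rceil$ times then gives $|\tilde U_i|\lesssim |U_i|$. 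Two implicit standing assumptions you silently use are (a) $x\in U(x,r)$, needed to pass from the covering of $\bigcup_x U(x,r_x)$ to the measure bound on $\Omega$, and (b) that the basis extends to radii up to $2c_1R$ so that $\tilde U_i$ and the intermediate inflations are defined; both are standard glosses inherited from the statement itself (which already presupposes $U(x_i, 2c_1 r_{x_i})$ makes sense), not gaps in your reasoning.
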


\section{A Gehring type argument}

To conclude the Gehring lemma for the gradient of the solution, 
we study coverings of its level sets. 
Although the construction of $S(z,r)$ only gave us sub-intrinsic cylinders, 
we can extract some additional information from the actual stopping time construction.
The stopping cylinders from the sub-intrinsic basis will turn out to be intrinsic 
in the sense of \eqref{eq:modtousual_1} and \eqref{eq:modtousual_2}.

Fix a center point in the space time,
let $R > 0$ and fix
\begin{equation}
\label{corr:alphaend}
\alpha > \max \left \lbrace  \frac{n}{2} \left(1- \frac{2}{p}\right) , 1+ \frac{p}{n} \right \rbrace .
\end{equation}
The lower bound $1+p/n$ is related to use of (iv) of Proposition \ref{prop:properties_deformation},
and it is likely that it can be lowered by replacing the equation $d_z(\rho) = 1$ there 
by a smaller lower bound on $d_z(\rho)$ following the reasoning of \cite{Boegelein2018}.
However, as the particular choice of $\alpha$ only plays a qualitative role in our argument, 
due to Proposition \ref{Prop:rhi_intrinsic},
we do not attempt to optimize its exact value.

Fix a cylinder $Q_{4R,(4R)^{p}}$ and let 
\[\lambda_0 = 1 + \left( \bariint_{Q_{4R,(4R)^{p}}}   \left( \frac{u}{4R}\right)^{\alpha p} \ddd \right)^{\frac{1}{\alpha p}}  + \left( \bariint_{Q_{4R,(4R)^{p}}}   |\nabla u|^{p}   \ddd \right)^{\frac{1}{p}}. \]
In what follows,
we consider sets $S(z,\rho)$ constructed using a value of $\alpha$ as in \eqref{corr:alphaend}.
Fix $R\leq R_1 < R_2 \leq 2R$ and denote 
\begin{equation}
\label{eq:notationEset}
E (r, \lambda ) = Q_{r,r^{p}} \cap \{ |\nabla u| > \lambda \} \cap \{ \text{Lebesgue points of} \ |\nabla u| \}
\end{equation} 
for $r \in (0,2R)$. 
 
\begin{lemma}
\label{lemma:stopping}
There exist constants $D = D(n,p,\Lambda)$ and $K = K(n,p,\Lambda,D)$ such that the following holds.
Let $u$ be a non-negative weak solution to \eqref{EQ1} and \eqref{structure}.
Let $C_0 =  [4 D R / (R_2 - R_1) ]^{1+ n/p}$ and
$\lambda \ge C_0 \lambda_0$.
Then for almost every $z \in E(R_1,\lambda)$ there is $\rho_z \in (0,(R_2-R_1)/(2D))$ such that 
\begin{itemize}
	\item it holds for all $\rho  \in [ \rho_z, 2R)$ 
	\[
	 \bariint_{S(z,\rho_z)} | \nabla  u|^{p} \ddd  = \lambda^{p} \ge \bariint_{S(z,\rho)} | \nabla  u|^{p} \ddd ;
	\]
\item every modified cylinder $\tilde{S}(z,\rho_z) := Q_{D d(\rho_z) \rho_z , (D\rho_z)^{p} } $ is $K$-intrinsic with $\alpha = 1$ in the sense of \eqref{eq:modtousual_1} and \eqref{eq:modtousual_2}.
\end{itemize}
\end{lemma}

\begin{proof}
We consider a positive number $D$,
whose exact value will be determined in the course of the proof.
We set $C_0 = (4D R /(R_2-R_1))^{(n+p)/p}$.
Let $z \in E(R_1, \lambda)$ and $\rho \leq R_2- R_1$. 
By the assumption $\lambda \ge C_0 \lambda_0$ and hence 
\begin{equation}
\label{corr:december1}
\bariint_{S(z,\rho)} | \nabla  u|^{p} \ddd \le \frac{(4R)^{n+p}}{d_z(\rho)^{n} \rho^{n+p}} \frac{\lambda^{p}}{C_0^{p}} . 
\end{equation}
Because $\alpha > 1+p/n$, it holds 
\[
\frac{\alpha p}{n+p+\alpha p} \ge \frac{p}{n+p} 
\]
and because $C_0 \ge 1$ it further holds 
\[
C_0^{\frac{\alpha p}{n+p+\alpha p}}  \ge   \frac{4DR}{R_2-R_1}  
\]
so that if $\rho \ge (R_2-R_1)/(2D)$,
then by the fourth item in Proposition \ref{prop:properties_deformation} 
it holds $d_z(\rho) = 1$.
For these values of $\rho$,
the right hand side of \eqref{corr:december1} is bounded by 
\[
4^{n+p} (R/\rho)^{n+p} C_0^{-p} \lambda^{p} \le \lambda^{p}.
\]
As 
\[
 \rho \mapsto \bariint_{S(z,\rho)} | \nabla  u|^{p} \ddd 
\]
is continuous,
we define $\rho_z$ to be the maximal number in $(0,(R_2-R_1)/(2D)]$ such that 
\[
\bariint_{S(z,\rho)} | \nabla  u|^{p} \ddd \ge \lambda^{p} .
\]
By the Lebesgue differentiation theorem such a number exists for almost every $z \in E(R_1,\lambda)$.
By continuity, the inequality above holds as an equality for the maximal $\rho_z$,
and the reverse inequality holds for all $\rho > \rho_z$ by maximality of $\rho_z$.
This concludes the argument for the first item in the lemma.


To prove the second item, fix $z \in Q_{R_1,R_1^{p}}$ and take the cylinder $ S(z,\rho_z) = Q_{d_z(\rho_z) \rho_z, \rho_z^{p}}$ as constructed above. 
We show first that $Q_{D\rho_z d(\rho_z) , (D \rho_z)^{p}}$ satisfies \eqref{eq:modtousual_1} and \eqref{eq:modtousual_2} 
with $\alpha $ as specified in \eqref{corr:alphaend}. 
We start with \eqref{eq:modtousual_1}. 
Note that for all $s \in [\rho_z, R]$
\begin{equation}
\label{eq:cov:1}
\bariint_{Q_{ s d_z( \rho_z) , s^{p} }} | \nabla u|^{p} \ddd  <  \left( \frac{d_z(s)}{d_z( \rho_z)} \right)^{n} \lambda^{p} \leq \left( \frac{s}{\rho_z} \right)^{ \frac{n(n+p \alpha + p)(p-2)}{2 \alpha p - n(p-2)} } \lambda^{p} ,
\end{equation}
as follows from item (iii) of Proposition \ref{prop:properties_deformation}.
By \eqref{eq:cov:1} with $s = 2D \rho_z$ 
\begin{align}
\label{eq:lambdamax}
\bariint_{Q_{ 2D \rho_z d_z(\rho_z)  , (2D \rho_z) ^{p} }} | \nabla u|^{p} \ddd 
	\lesssim \lambda^{p}
	\lesssim \bariint_{Q_{ 2 \rho_z d_z(\rho_z)   , (2\rho_z) ^{p} }} | \nabla u|^{p} \ddd .
\end{align}
Further, by (ii) and (iii) of Proposition \ref{prop:properties_deformation} $d_z( \rho_z) \sim d_z( 2D \rho_z)$.
This together with the definition of $d_z(\rho_z)$ and equation \eqref{eq:lambdamax} implies
\begin{multline*}
\left( \bariint_{Q_{2D \rho_z d_z( \rho_z)  , (2D \rho_z)^{p}}}  \left( \frac{u}{ d_z( \rho_z)  \rho_z} \right)^{\alpha p} \ddd \right)^{\frac{1}{\alpha p}}
	\lesssim \left( \bariint_{Q_{ 2D \rho_z  d(2D \rho_z)  , (2D \rho_z)^{p} }}  \left( \frac{u}{ d_z( 2 D \rho_z)  \rho_z} \right)^{\alpha p} \ddd \right)^{\frac{1}{\alpha p}} \\
	\lesssim d(2D \rho_z )^{- \frac{p}{p-2}} \lambda 
	\lesssim d_z(\rho_z)^{- \frac{p}{p-2}} \bariint_{Q_{ D \rho_z d_z( \rho_z)  , (D\rho_z) ^{p} }} | \nabla u|^{p} \ddd 
\end{multline*}
which is \eqref{eq:modtousual_1} for some value of $K$ only depending on $n$, $p$, $\alpha$ and $D$. 

Next we verify the other condition \eqref{eq:modtousual_2}.
We abbreviate $d_z(\rho_z) = \delta$. 
If $\delta = 1$, the second alternative in \eqref{eq:modtousual_2} is satisfied and there is nothing to prove.
Assume $\delta < 1$ and consider first the case 
\[\rho_z \leq \rho_{max}(z,d_z(\rho_z) ) \leq D \rho_z .\] 
The function $\rho_{max}$ is the one defined in \eqref{eq:rhomax}.
Then by \eqref{eq:rhomaxprop} and \eqref{eq:lambdamax}
\begin{multline*}
\delta^{-\frac{p}{p-2}} = \frac{1}{\lambda} \left( \bariint_{Q_{\delta \rho_{max}(z, \delta ), \rho_{max}(z,\delta )^{p}}}  \left( \frac{u}{ \delta  \rho_{max}(z,\delta ) } \right)^{\alpha p} \ddd \right)^{\frac{1}{\alpha p}}		\\
	\lesssim    \left( \bariint_{ Q_{D \delta \rho_z, (D \rho_z)^{p}} }  \left( \frac{u}{2D\delta \rho_z} \right)^{\alpha p} \ddd \right)^{\frac{1}{\alpha p}}  \left( \bariint_{ Q_{ 2D \delta  \rho_z,(2D\rho_z)^{p}}}   |\nabla u|^{p} \ddd \right)^{-\frac{1}{ p}}  
\end{multline*}
which is the first alternative in \eqref{eq:modtousual_2}.

We are left with the case $\rho_{max}(z, \delta )  > D \rho_z$,
and we shall show that the second alternative in \eqref{eq:modtousual_2} holds.
Set
\[\rho_{*} = \frac{ \rho_{max}(z, \delta)}{D}\]
so that $\rho_* \in (\rho_z, \rho_{max}(z, \delta)) $. 
Now by \eqref{eq:rhomaxprop}
and Proposition~\ref{Prop:rhi_intrinsic} we find 
\begin{align*}
\delta^{-\frac{p}{p-2}} \lambda 
	&=  \left( \bariint_{Q_{ D \rho_* \delta , (D \rho_*)^{p}}}  \left( \frac{u}{  D \rho_* \delta } \right)^{\alpha p} \ddd \right)^{\frac{1}{\alpha p}} 
	\leq C \left( \bariint_{Q_{2D \rho_* \delta, (2D \rho_*)^{p}}}  \left( \frac{u}{ D  \rho_*  \delta  }  \right)^{ p} \ddd \right)^{\frac{1}{ p}} .
\end{align*}
None of our definitions applies to values of $u$ in the large cylinder above,
but we do know about the values of $\nabla u$.
On the right hand side of the display above,
we estimate by the Poincar\'e inequality 
\begin{multline*}
\left(\bariint_{Q_{2D \rho_* \delta, (2D \rho_*)^{p}}}  \left( \frac{u}{ D  \rho_*  \delta  }  \right)^{ p} \ddd \right)^{\frac{1}{p}} \\
	\le (2D \rho_*)^{-1} \left( \bariint_{Q_{2D \rho_* \delta, (2D \rho_*)^{p}}} |u(t,x) - u_{\{t\} \times B_{   \delta \rho_*}} | ^{ p} \ddd \right)^{\frac{1}{p}}+  \frac{C}{D \rho_* \delta}  \left( \barint_{ I_{(2D \rho_*)^{p}} } |u_{B_{  \delta \rho_*}}| ^{ p} \ddd \right)^{\frac{1}{ p}} \\
	\le  C \left( \bariint_{Q_{2D \rho_* \delta, (2D \rho_*)^{p}}} |\nabla u | ^{ p} \ddd \right)^{1/p} + \frac{C}{D \rho_* \delta}  \left( \barint_{ I_{(2D \rho_*)^{p}} } |u_{B_{  \delta \rho_*}}| ^{ p} \ddd \right)^{\frac{1}{ p}}.
\end{multline*}
By \eqref{eq:lambdamax}, the first term is bounded by $C \lambda$,
which is good.
The second term has a spatial support smaller than previously
but still rather long support in time.
This can be dealt with using the equation.

As $p\ge 2$,
we can use Jensen's inequality to bound the second term by
\begin{multline*}
\frac{C}{D \rho_* \delta}  \left( \barint_{ I_{(2D \rho_*)^{p}} } | (u^{p-1})_{B_{  \delta \rho_*}}| ^{ p'} \ddd \right)^{\frac{1}{ p}}
 \\
 \le \frac{C}{D \rho_* \delta}  \left( \barint_{ I_{(2D \rho_*)^{p}} } | (u^{p-1})_{B_{ \delta \rho_*}} - (u^{p-1})_{Q_{  \delta \rho_*, \rho_*^{p}} }| ^{ p'} \ddd \right)^{\frac{1}{ p}} + \frac{C}{D \rho_* \delta} (u^{p-1})_{Q_{  \delta \rho_*, \rho_*^{p}} }^{\frac{1}{p-1}}.
\end{multline*}
According to Lemma \ref{lemma:slice_average}, H\"older's inequality and the definition of $\rho_{*}$ and $\delta$,
the first term is bounded by 
\[
\frac{C}{D \rho_* \delta}  
\left( \frac{1}{(\delta \rho_{*})^{n+1}}  \iint_{Q_{2\delta \rho_*,(2D \rho_*)^{p}} } |\nabla u|^{p-1} \ddd  \right)^{\frac{1}{p-1}}
\le  C    \delta^{-\frac{p}{p-1}}     \lambda .
\]
Writing $q = p-1$ and $q' = (p-1)/(p-2)$,
we use Young's inequality with $\epsilon$ to bound 
\[
C    \delta^{-\frac{p}{p-1}}\lambda  =  ( \lambda^{\frac{1}{q'}} \delta^{-\frac{p}{p-1}}) \cdot (C\lambda^{\frac{1}{q}})
\le  \epsilon \lambda \delta^{- \frac{p}{p-2}} + C_\epsilon \lambda . 
\]
As the first term can be sent back to the left hand side,
we are done with this expression.
Finally, by H\"older's inequality  
\[
\frac{C}{D \rho_* \delta} (u^{p-1})_{Q_{  \delta \rho_*, \rho_*^{p}} }^{\frac{1}{p-1}}
 \le \frac{C}{D} \left( \bariint_{Q_{ \rho_* \delta ,  \rho_*^{p}}}  \left( \frac{u}{ \rho_* \delta } \right)^{\alpha p} \ddd \right)^{\frac{1}{\alpha p}} 
\leq  \frac{C_2}{D} \delta^{-\frac{p}{p-2}} \lambda .
\]
Altogether we have shown 
\[
\lambda \delta^{-\frac{p}{p-2}} \le C_1 \lambda + \epsilon \lambda \delta^{-\frac{p}{p-2}} + \frac{C_2}{D} \delta^{-\frac{p}{p-2}} \lambda . 
\]
As $C_2$ is independent of $D$,
we can choose $D$ large enough so that $\delta^{-\frac{p}{p-2}} \le C $ for a constant only depending on $n$, $p$, $\alpha$ and $\Lambda$.
This is the second case of \eqref{eq:modtousual_2}.

As we have shown that \eqref{eq:modtousual_1} and \eqref{eq:modtousual_2} 
hold for $Q_{D \delta \rho_z , (D \delta \rho_z)^{p} }$ 
with $\alpha $ as specified in \eqref{corr:alphaend}, 
it follows from Proposition \ref{Prop:rhi_intrinsic} 
and the fact 
\[ \bariint_{Q_{2D \delta \rho_z , (2D \delta \rho_z)^{p} }} |\nabla  u|^{p} \ddd \sim \bariint_{Q_{2 \delta \rho_z , (2 \delta \rho_z)^{p} }} |\nabla  u|^{p} \ddd \] 
due to the maximality of $\rho_z$
that they also hold for $Q_{2D \delta \rho_z , (2D \delta \rho_z)^{p} }$ and $\alpha = 1$. 
\end{proof}

We can now use the following reverse H\"older inequality from \cite{Boegelein2018}
and the rest of the argument is standard. 

\begin{proposition}[Proposition 6.1 in \cite{Boegelein2018}]
Let $Q_{\delta r,r^{p}}$ be such that \eqref{eq:modtousual_1} and \eqref{eq:modtousual_2} hold with $\alpha = 1$. Then
\[\bariint_{Q_{\delta r, r^{p}}} |\nabla u|^{p} \ddd \leq C \left( \bariint_{Q_{2\delta r, (2r)^{p}}} |\nabla u|^{q} \ddd \right)^{\frac{p}{q}}  \]
where $q = \max ( np/(n+2), p-1 )$ and $C = C(n,p,\Lambda,K)$.
\end{proposition}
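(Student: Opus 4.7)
The plan is to combine a Caccioppoli-type energy estimate with a parabolic Sobolev--Poincar\'e inequality, and use the intrinsic scaling conditions \eqref{eq:modtousual_1}--\eqref{eq:modtousual_2} to absorb the leftover constant averages.

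First I would apply the Caccioppoli estimate \eqref{eq:cacc3} on a nested pair of cylinders close to $Q_{\delta r, r^{p}}$, and use Young's inequality on the mixed term $u|\nabla u|^{p-1}/(\delta r)$ in order to absorb a small $|\nabla u|^{p}$ via Lemma~\ref{lemma:iterationlemma}. This produces an estimate of the shape
\[ \bariint_{Q_{\delta r, r^{p}}} |\nabla u|^{p} \ddd \lesssim \bariint_{Q_{2\delta r, (2r)^{p}}} \left( \frac{u}{\delta r} \right)^{p} \ddd. \]
By the best constant property, $u$ on the right may be replaced by $u - c$ for any constant $c$ up to a correction of size $(c/(\delta r))^{p}$.

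The next step is a parabolic Sobolev--Poincar\'e inequality. The spatial Sobolev--Poincar\'e at each time slice controls $u - (u)_B$ in $L^{p}_x$ by $(\delta r)\|\nabla u\|_{L^{np/(n+p)}_x}$, while the time oscillation of the spatial averages is controlled via Lemma~\ref{lemma:slice_average}, which bounds the increment of $\int u^{p-1}\eta$ by $\int |\nabla u|^{p-1}|\nabla \eta|$. Interpolating these ingredients leads to
\[ \bariint_{Q_{2\delta r, (2r)^{p}}} \left( \frac{u - c}{\delta r} \right)^{p} \ddd \lesssim \left( \bariint_{Q_{2\delta r, (2r)^{p}}} |\nabla u|^{q} \ddd \right)^{p/q}, \]
where $q = \max(np/(n+2), p-1)$: the exponent $np/(n+2)$ comes from the spatial Sobolev embedding together with parabolic scaling, while $p-1$ is enforced by Lemma~\ref{lemma:slice_average} through the nonlinear time term $\partial_t u^{p-1}$.

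Finally, the constant $c$ itself must be balanced against $|\nabla u|$. If the second alternative $\delta^{-p/(p-2)} \leq K$ of \eqref{eq:modtousual_2} holds, the cylinder is essentially parabolic and $c/(\delta r)$ is controlled by a Moser-type supremum estimate (Lemma~\ref{lemma:harnack}). In the opposite regime, \eqref{eq:modtousual_1} and the first alternative of \eqref{eq:modtousual_2} with $\alpha = 1$ provide matching two-sided bounds relating $u/(\delta r)$ and $|\nabla u|$ on the concentric cylinders, which let one absorb $(c/(\delta r))^{p}$ back into the reverse H\"older inequality after one more application of Lemma~\ref{lemma:iterationlemma}. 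The main obstacle is the careful handling of the doubly nonlinear time term: Lemma~\ref{lemma:slice_average} naturally controls oscillation of $u^{p-1}$ rather than $u$, and passing between the two without losing integrability---particularly for large $p$---is exactly what forces the appearance of the exponent $q = \max(np/(n+2), p-1)$.
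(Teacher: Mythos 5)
This proposition is cited verbatim from \cite{Boegelein2018} (Proposition 6.1) and the paper under review gives no proof of it, so your proposal must be evaluated against the general strategy of such reverse H\"older estimates rather than against a proof text in this paper. Your high-level skeleton — Caccioppoli plus a parabolic Sobolev--Poincar\'e plus the intrinsic scaling conditions — is the right one, and you correctly identify Lemma~\ref{lemma:slice_average} and the exponent $q=\max(np/(n+2),p-1)$ as markers of the doubly nonlinear time term. However, the step where the constant $c$ is absorbed is where the argument as written breaks.

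The \emph{best constant property} gives you the decomposition
\[
\bariint_{Q}|\nabla u|^{p}\ddd \ \lesssim\ \bariint_{2Q}\Bigl(\tfrac{u-c}{\delta r}\Bigr)^{p}\ddd\ +\ \Bigl(\tfrac{c}{\delta r}\Bigr)^{p},
\]
but the leftover term $(c/(\delta r))^{p}$ is not small. Taking $c=(u)_{2Q}$ and using \eqref{eq:modtousual_1}--\eqref{eq:modtousual_2} with $\alpha=1$, one has
\[
\Bigl(\tfrac{c}{\delta r}\Bigr)^{p}\ \sim\ \bariint_{2Q}\Bigl(\tfrac{u}{\delta r}\Bigr)^{p}\ddd\ \sim\ \delta^{-\frac{p^{2}}{p-2}}\bariint_{Q}|\nabla u|^{p}\ddd,
\]
and since $\delta\le 1$ and $p>2$ the prefactor $\delta^{-p^{2}/(p-2)}\ge 1$. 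So this term is at least of the \emph{same} order as the quantity you are trying to control, and can be much larger when $\delta$ is small; it cannot be absorbed via Lemma~\ref{lemma:iterationlemma}, and bounding it by $(\bariint|\nabla u|^{q})^{p/q}$ is exactly the reverse H\"older you are trying to prove, hence circular. In the regime $\delta^{-p/(p-2)}\le K$ you do get $(c/(\delta r))^{p}\lesssim\bariint|\nabla u|^{p}$, but that is still the left-hand side, not the right-hand side.

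The fix is structural: you must obtain a Caccioppoli estimate in which the constant is subtracted \emph{before} integrating by parts, i.e.\ test with $(u-c)\varphi$, so that no free $(c/(\delta r))^{p}$ term appears. For Trudinger's equation this is nontrivial because $\partial_{t}(u^{p-1})$ is not the time derivative of $(u-c)^{p-1}$; the resulting boundary term involves a quantity of the form $\int_{c}^{u}(\xi^{p-1}-c^{p-1})\,d\xi$, and the intrinsic conditions \eqref{eq:modtousual_1}--\eqref{eq:modtousual_2} together with Proposition~\ref{Prop:rhi_intrinsic} (the $\sup$-bound) are precisely what make this quantity comparable to $|u-c|^{p}$ on the intrinsic cylinder. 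Two smaller points: passing from the oscillation of $u^{p-1}$ (what Lemma~\ref{lemma:slice_average} controls) to the oscillation of $u$ uses the elementary inequality $|a-b|^{p-1}\le|a^{p-1}-b^{p-1}|$ for $a,b\ge 0$, $p\ge 2$, which should be invoked explicitly; and the exponent $np/(n+2)$ does not come from the spatial Sobolev inequality alone (that gives $np/(n+p)$), but from the parabolic interpolation with the $\sup_{t}\int(u/r)^{p}\,dx$ term on the left of the Caccioppoli estimate — that interpolation is where the actual work is, not a side remark.
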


\begin{proposition}
\label{prop:RHI_levelset}
It holds
\[\iint_{E(R_1, \lambda )} \n ^{p} \ddd \lesssim \iint_{E(R_2, \lambda)} \lambda^{p-q}  \n^{q-p} \ddd \]
where $E(R_2, \lambda)$ is the set from \eqref{eq:notationEset} and $\lambda$ as in Lemma \ref{lemma:stopping}.
\end{proposition}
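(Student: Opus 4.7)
The plan is to execute a level-set / stopping-time Gehring argument built on the intrinsic covering machinery of Sections \ref{sec:pgeq2}--\ref{sec:constrbas}. First, for every $z \in E(R_1, \lambda)$ I would select the stopping radius $\rho_z$ and the intrinsic cylinder $\tilde S(z, \rho_z) = Q_{A \rho_z d_z(\rho_z), (A\rho_z)^p}$ supplied by Lemma \ref{lemma:stopping}. By that lemma, $\bariint_{S(z, \rho_z)} |\nabla u|^p \ddd = \lambda^p$, $\tilde S(z, \rho_z) \subset Q_{R_2, R_2^p}$, and both \eqref{eq:modtousual_1} and \eqref{eq:modtousual_2} hold at $\tilde S(z, \rho_z)$ with $\alpha = 1$. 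By Proposition \ref{prop:properties_deformation} the basis $\{S(z, r)\}$ verifies the hypotheses of Lemma \ref{Lm:Vitali:1}, so I would extract a countable disjoint subfamily $\{S(z_i, \rho_{z_i})\}_i$ whose $2 c_1$-enlargements cover $E(R_1, \lambda)$; choosing $A$ in Lemma \ref{lemma:stopping} sufficiently large relative to $c_1$ and the growth exponent in Proposition \ref{prop:properties_deformation}(iv) ensures $S(z_i, 2 c_1 \rho_{z_i}) \subset \tilde S_i := \tilde S(z_i, \rho_{z_i})$, so already the $\tilde S_i$ themselves cover $E(R_1, \lambda)$.

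Second, I would apply Proposition \ref{prop:RHI_intrinsic} on each $\tilde S_i$ to obtain
\[
\bariint_{\tilde S_i} |\nabla u|^p \ddd \leq C \left( \bariint_{2 \tilde S_i} |\nabla u|^q \ddd \right)^{p/q}.
\]
The stopping equality on $S(z_i, \rho_{z_i})$, together with the volume ratio $|S(z_i, \rho_{z_i})| / |\tilde S_i| \eqsim A^{-(n+p)}$ and monotone decay of the mean on enclosing cylinders (since $\rho_{z_i}$ is the largest stopping radius), yields the two-sided comparison $\bariint_{\tilde S_i} |\nabla u|^p \ddd \eqsim \lambda^p$. Rearranging the reverse Hölder inequality then gives $\lambda^q |\tilde S_i| \lesssim \int_{2 \tilde S_i} |\nabla u|^q \ddd$, while the upper bound reads $\int_{\tilde S_i} |\nabla u|^p \ddd \lesssim \lambda^p |\tilde S_i|$. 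Combining the two produces the pointwise-in-$i$ estimate
\[
\int_{\tilde S_i} |\nabla u|^p \ddd \lesssim \lambda^{p-q} \int_{2 \tilde S_i} |\nabla u|^q \ddd.
\]

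Third, I would split the integral on the right at the threshold $\eta \lambda$: the contribution of $\{|\nabla u| \leq \eta \lambda\}$ is at most $(\eta \lambda)^q |2 \tilde S_i| \lesssim \eta^q \lambda^q |\tilde S_i|$, and for $\eta$ small depending only on $n$, $p$ and $\Lambda$ this is absorbed using $\lambda^q |\tilde S_i| \lesssim \int_{2 \tilde S_i} |\nabla u|^q \ddd$. Summing over $i$ and using the disjointness of $\{S(z_i, \rho_{z_i})\}$ together with the almost uniform shape property to convert it into bounded overlap of the enlargements $\{2 \tilde S_i\}$ inside $Q_{R_2, R_2^p}$, I arrive at
\[
\int_{E(R_1, \lambda)} |\nabla u|^p \ddd \lesssim \lambda^{p-q} \int_{E(R_2, \eta \lambda)} |\nabla u|^q \ddd,
\]
which, after relabeling $\eta \lambda \mapsto \lambda$ (permissible since the hypothesis $\lambda \geq C_0 \lambda_0$ is open at infinity and the conclusion is strongest at the smallest admissible $\lambda$), is the claimed inequality. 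The main obstacle is the three-scale bookkeeping $S(z_i, \rho_{z_i}) \subset \tilde S_i \subset 2 \tilde S_i$: $S_i$ is needed for the Vitali covering, $\tilde S_i$ is the unique scale at which both intrinsic conditions with $\alpha = 1$ are guaranteed by Lemma \ref{lemma:stopping} so that Proposition \ref{prop:RHI_intrinsic} applies, and $2 \tilde S_i$ is where the sub-exponent integral lives; the parameters $C_0$ and $A$ must be chosen compatibly so that all three enlargements remain inside $Q_{R_2, R_2^p}$ and so that the stopping equality on the small cylinder transfers to a two-sided $\lambda^p$ comparison on the larger one.
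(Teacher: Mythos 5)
Your proposal follows the paper's own proof essentially step for step: the same stopping cylinders $\tilde S(z,\rho_z)$ from Lemma \ref{lemma:stopping}, the same Vitali extraction via Lemma \ref{Lm:Vitali:1}, the same application of Proposition \ref{prop:RHI_intrinsic} combined with the two-sided comparison $\bariint_{\tilde S_i}|\nabla u|^p\eqsim\lambda^p$, the same absorption of the small level set, and the same rescaling of $\lambda$ at the end (the paper phrases it as a "trivial" estimate between $E(R_1,\epsilon\lambda)$ and $E(R_1,\lambda)$ rather than a relabeling, but the content is identical). The only cosmetic divergence is that you run Vitali on the basis $\{S(z,r)\}$ and then absorb the $2c_1$-dilate into $\tilde S_i$ by enlarging $A$, whereas the paper applies Vitali directly to the $\{\tilde S(z,r)\}$ and sums over the resulting disjoint family; this is the same idea with the enlargement accounted for at a different moment.
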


\begin{proof}
The sets $\tilde{S}(z,r) = Q_{D r d_z(r) , (D r)^{p}}(z)$ are defined as the anisotropic $(D^{p},D)$-dilations of $S(z,r)$ (as in Lemma \ref{lemma:stopping}). The basis $\tilde{S}(z, r)$ satisfies the hypotheses of Lemma \ref{Lm:Vitali:1} as can be seen from Proposition \ref{prop:properties_deformation}. Consider the cover $\{ \tilde{S}(z, \rho_z ) : z \in E(R_1, \lambda) \}$ of the set $E(R_1, \lambda)$ where the cylinders $\tilde{S}(z, \rho_z)$ are the ones from Lemma \ref{lemma:stopping} and the constant $c_1$ is as in Lemma \ref{Lm:Vitali:1}. By Lemma \ref{Lm:Vitali:1}, we can extract a countable collection of points $\{z_i\}$ so that 
\[E(R_1, \lambda) \subset  \bigcup_{i} \tilde{S}(z_i, 2c_1 \rho_i) \subset E(R_2, \lambda), \quad \tilde{S}(z_i, \rho_i  ) \quad \text{are disjoint}.   \] 
Because for any $\epsilon > 0$
\begin{multline*}
\lambda^{p} \leq c \bariint_{\tilde{S}(z_i,  \rho_i)} \n^{p} \ddd
			\leq c \left( \bariint_{\tilde{S}(z_i,  \rho_i )}  \n  ^{q} \ddd \right)^{\frac{p}{q}}\\
			\leq c \epsilon \lambda^{p} + c \lambda^{p-q} \cdot \frac{1}{|\tilde{S}(z_i, \rho_i)|} \iint_{\tilde{S}(z_i,\rho_i) \cap E(R_2, \epsilon \lambda)} \n^{q} \ddd ,
\end{multline*}  
it holds for $\epsilon = (2c)^{-1} $ that 
\[ \bariint_{\tilde{S}(z_i, \rho_i)} \n^{p} \ddd \leq c \lambda^{p} \lesssim   \frac{1}{|\tilde{S}(z_i,\rho_i)|} \iint_{\tilde{S}(z_i,\rho_i) \cap E(R_2, \epsilon \lambda)}  \lambda^{p-q}  \n^{q} \ddd . \]
By the inequality in Lemma \ref{lemma:stopping}, 
it holds
\begin{align*}
\bariint_{\tilde{S}(z_i, 2 c_1 \rho_i)} \n^{p} \ddd \lesssim \lambda^{p} \lesssim \bariint_{\tilde{S}(z_i, \rho_i)} \n^{p} \ddd
\end{align*}
so that 
\begin{multline*}
\int_{E(R_1, \lambda) } \n^{p} \ddd 
	\leq \sum_{i} \iint_{\tilde{S}(z_i, 2 c_1 \rho_i)} \n^{p} \ddd 
	\lesssim \sum_{i} \iint_{\tilde{S}(z_i, \rho_i)} \n^{p} \ddd \\
	\lesssim \sum_{i} \iint_{\tilde{S}(z_i,\rho_i) \cap E(R_2, \epsilon \lambda)}  \lambda^{p-q}  \n^{q} \ddd
	\lesssim \iint_{ E(R_2, \epsilon \lambda)}  \lambda^{p-q}  \n^{q} \ddd  .
\end{multline*}
Because of the (trivial) estimate
\[\int_{ E(R_1, \epsilon \lambda)  \setminus E(R_1, \lambda) } \n^{p} \ddd \leq   \iint_{ E(R_2, \epsilon \lambda)}  \lambda^{p-q}  \n^{q} \ddd     \]
and the fact that $\epsilon$ only depends on the data,
we conclude
\[
\int_{E(R_1, \lambda) } \n^{p} \ddd 
	\lesssim \iint_{ E(R_2, \lambda)}  \lambda^{p-q}  \n^{q} \ddd\]
as was claimed.
\end{proof}

\begin{proof}[Proof of Theorem \ref{thm:degenerate}]
Given the Proposition \ref{prop:RHI_levelset}, the theorem follows by a standard argument for Gehring's lemma. This has been done, for instance, in section 7.6 of \cite{Boegelein2018} starting from the inequality as given in Proposition \ref{prop:RHI_levelset}. The result in \cite{Boegelein2018} comes with an additional constant term $1$ on the right hand side, but that can be removed as the structure of the PDE is invariant under scaling. Indeed, by applying the result to $u(\delta^{-p}t, \delta^{-1}x)$ on $Q_{\delta \rho, (\delta \rho)^{p}}$ and sending $\delta \to 0$ one recovers the scaling invariant version of the estimate.
\end{proof}

\bibliography{references}

\begin{thebibliography}{10}

\bibitem{Boegelein2018}
V.~B\"{o}gelein, F.~Duzaar, J.~Kinnunen, and C.~Scheven.
\newblock Higher integrability for doubly nonlinear parabolic systems.
\newblock {\em J. Math. Pures Appl. (9)}, 143:31--72, 2020.

\bibitem{Boegelein2019}
V.~{B\"ogelein}, F.~{Duzaar}, R.~{Korte}, and C.~{Scheven}.
\newblock The higher integrability of weak solutions of porous medium systems.
\newblock {\em Advances in Nonlinear Analysis}, 8:1004--1034, 2019.

\bibitem{Bojarski1957}
B.~V. Boyarski\u{\i}.
\newblock Generalized solutions of a system of differential equations of first
  order and of elliptic type with discontinuous coefficients.
\newblock {\em Mat. Sb. N.S.}, 43(85):451--503, 1957.

\bibitem{DiBenedetto1993a}
E.~{DiBenedetto}.
\newblock {\em Degenerate parabolic equations.}
\newblock New York, NY: Springer-Verlag, 1993.

\bibitem{DiBenedetto1985a}
E.~{DiBenedetto} and A.~{Friedman}.
\newblock H\"older estimates for non-linear degenerate parabolic systems.
\newblock {\em J. Reine Angew. Math.}, 357:1--22, 1985.

\bibitem{Gehring1973}
F.~W. {Gehring}.
\newblock The {L}\(^p\)-integrability of the partial derivatives of a
  quasiconformal mapping.
\newblock {\em Acta Math.}, 130:265--277, 1973.

\bibitem{Gianazza2019}
U.~{Gianazza} and S.~{Schwarzacher}.
\newblock Self-improving property of degenerate parabolic equations of porous
  medium-type.
\newblock {\em Amer. J. Math.}, 141(2):399--446, 2019.

\bibitem{Gianazza2019b}
U.~Gianazza and S.~Schwarzacher.
\newblock Self-improving property of the fast diffusion equation.
\newblock {\em J. Funct. Anal.}, page 108291, 2019.

\bibitem{Giaquinta1982}
M.~{Giaquinta} and M.~{Struwe}.
\newblock On the partial regularity of weak solutions on nonlinear parabolic
  systems.
\newblock {\em Math. Z.}, 179:437--451, 1982.

\bibitem{Giusti2003}
E.~{Giusti}.
\newblock {\em Direct methods in the calculus of variations.}
\newblock Singapore: World Scientific, 2003.

\bibitem{Hynd2017}
R.~Hynd and E.~Lindgren.
\newblock Large time behavior of solutions of {T}rudinger's equation.
\newblock {\em J. Differential Equations}, 274:188--230, 2021.

\bibitem{Kinnunen2007}
J.~{Kinnunen} and T.~{Kuusi}.
\newblock Local behaviour of solutions to doubly nonlinear parabolic equations.
\newblock {\em Math. Ann.}, 337(3):705--728, 2007.

\bibitem{Kinnunen2000}
J.~{Kinnunen} and J.~L. {Lewis}.
\newblock Higher integrability for parabolic systems of \(p\)-{L}aplacian type.
\newblock {\em Duke Math. J.}, 102(2):253--271, 2000.

\bibitem{Kuusi2012a}
T.~{Kuusi}, R.~{Laleoglu}, J.~{Siljander}, and J.~M. {Urbano}.
\newblock H\"older continuity for {T}rudinger's equation in measure spaces.
\newblock {\em Calc. Var. Partial Differential Equations}, 45(1-2):193--229,
  2012.

\bibitem{Kuusi2012}
T.~{Kuusi}, J.~{Siljander}, and J.~M. {Urbano}.
\newblock Local {H}\"older continuity for doubly nonlinear parabolic equations.
\newblock {\em Indiana Univ. Math. J.}, 61(1):399--430, 2012.

\bibitem{Lindgren2019}
E.~Lindgren and P.~Lindqvist.
\newblock On a comparison principle for {T}rudinger's equation.
\newblock {\em arXiv:1901.03591}, 2019.

\bibitem{Lindqvist1990}
P.~{Lindqvist}.
\newblock On the equation \(div(| \nabla u| ^{p-2}\nabla u)+\lambda | u|
  ^{p-2}u=0\).
\newblock {\em Proc. Amer. Math. Soc.}, 109(1):157--164, 1990.

\bibitem{Lindqvist1992}
P.~{Lindqvist}.
\newblock Addendum to ``{O}n the equation div\((|\nabla u|^{p-2}\nabla
  u)+\lambda| u|^{p-2}u=0\)''.
\newblock {\em Proc. Amer. Math. Soc.}, 116(2):583--584, 1992.

\bibitem{Meyers1963}
N.~{Meyers}.
\newblock An \({L}^ p\)-estimate for the gradient of solutions of second order
  elliptic divergence equations.
\newblock {\em Ann. Sc. Norm. Super. Pisa Cl. Sci. (5)}, 17:189--206, 1963.

\bibitem{Meyers1975}
N.~G. {Meyers} and A.~{Elcrat}.
\newblock Some results on regularity for solutions of non-linear elliptic
  systems and quasi-regular functions.
\newblock {\em Duke Math. J.}, 42:121--136, 1975.

\bibitem{Moring2019}
K.~Moring, C.~Scheven, S.~Schwarzacher, and T.~Singer.
\newblock Global higher integrability of weak solutions of porous medium
  systems.
\newblock {\em Commun. Pure Appl. Anal.}, 19(3):1697--1745, 2020.

\bibitem{Moser1964}
J.~{Moser}.
\newblock A {H}arnack inequality for parabolic differential equations.
\newblock {\em Comm. Pure Appl. Math.}, 17:101--134, 1964.

\bibitem{Schwarzacher2014}
S.~{Schwarzacher}.
\newblock H\"older-{Z}ygmund estimates for degenerate parabolic systems.
\newblock {\em J. Differential Equations}, 256(7):2423--2448, 2014.

\bibitem{Siljander2010}
J.~{Siljander}.
\newblock Boundedness of the gradient for a doubly nonlinear parabolic
  equation.
\newblock {\em J. Math. Anal. Appl.}, 371(1):158--167, 2010.

\bibitem{Trudinger1968}
N.~S. {Trudinger}.
\newblock Pointwise estimates and quasilinear parabolic equations.
\newblock {\em Comm. Pure Appl. Math.}, 21:205--226, 1968.

\bibitem{Vespri1992}
V.~{Vespri}.
\newblock On the local behaviour of solutions of a certain class of doubly
  nonlinear parabolic equations.
\newblock {\em Manuscripta Math.}, 75(1):65--80, 1992.

\end{thebibliography}

\bibliographystyle{abbrv}

\end{document}